\renewcommand{\a}{\alpha}
\renewcommand{\b}{\beta}
\newcommand{\g}{\gamma}
\newcommand{\n}{\nu}
\newcommand{\om}{\omega}
\newcommand{\OM}{\Omega}
\newcommand{\s}{\sigma}
\newcommand{\e}{\varepsilon}
\newcommand{\f}{\varphi}
\newcommand{\F}{\Phi}
\newcommand{\p}{\psi}
\newcommand{\R}{{\mathbb R}}
\newcommand{\RR}{{\mathbb R}^2}
\newcommand{\curl}{{\rm curl}\,}
\newcommand{\diver}{{\rm div}\,}
\newcommand{\pd}{\partial}
\newcommand{\dt}{\partial_t}
\newcommand{\lllll}{L^{\infty}\left(\R^+,L^1\cap L^{\infty}(\RR)\right)}
\newcommand{\llll}{L^1\cap L^{\infty}(\RR)}
\newcommand{\eps}{\varepsilon}
\newcommand{\supp}{\operatorname{supp\,}}
\newcommand{\loc}{\operatorname{{loc}}}
\newtheorem{theorem}{Theorem}[section]
\newtheorem{proposition}[theorem]{Proposition}
\newtheorem{lemma}[theorem]{Lemma}
\newtheorem{corollary}[theorem]{Corollary}
\newtheorem{definition}[theorem]{Definition}
\theoremstyle{remark}
\newtheorem{remark}[theorem]{Remark}
\numberwithin{equation}{section}
\newcommand{\na}{\nabla}
\newcommand{\vti}{\tilde{v}}
\begin{document}


\title[Uniqueness for the vortex-wave system]{Uniqueness for the vortex-wave system when the vorticity is constant near the point vortex}
\author[C. Lacave $\&$ E. Miot]{Christophe Lacave $\&$ Evelyne Miot}

\address[C. Lacave]{Universit\'e de Lyon\\
Universit\'e Lyon1\\
INSA de Lyon, F-69621\\
Ecole Centrale de Lyon\\
CNRS, UMR 5208 Institut Camille Jordan\\
Batiment du Doyen Jean Braconnier\\
43, blvd du 11 novembre 1918\\
F - 69622 Villeurbanne Cedex\\
France} \email{lacave@math.univ-lyon1.fr}

\address[E. Miot]{Laboratoire J.-L. Lions UMR 7598,
Universit\'e Pierre et Marie Curie,
175 rue du Chevaleret, 75013 Paris,
France} \email{miot@ann.jussieu.fr}

\date{\today}

\begin{abstract}
We prove uniqueness for the vortex-wave system with a single point vortex introduced by Marchioro and Pulvirenti
\cite{mar_pul} in the case where the vorticity is initially constant near the point vortex.
 Our method relies on the Eulerian approach for this problem and in particular
 on the formulation in terms of the velocity.
\end{abstract}

\maketitle

\section{Introduction}

\label{introduction}

In this paper, we study a system occurring in two dimensional fluid
dynamics. The motion of an ideal incompressible fluid in $\RR$ with
divergence-free velocity field $v=(v_1,v_2):\R^+ \times  \RR
\rightarrow \RR$ and vorticity $\om=\curl v=\partial_1
v_2-\partial_2 v_1:\R^+ \times \RR \rightarrow \R$ is given by the
Euler equations
\begin{equation}
\label{i1}
\begin{cases}
\dt \om + v\cdot \nabla \om=0,\\
\om=\curl v,\:\diver v=0,
\end{cases}
\end{equation}
where $\diver v=\partial_1 v_1 + \partial_2 v_2$. For this system,
Yudovich's Theorem states global existence and uniqueness in
$\lllll$ for an initial vorticity $\om_0 \in \llll$. Equation
\eqref{i1} is a transport equation with field $v$, therefore one may
solve it with the method of characteristics. When $v$ is smooth, it
gives rise to a flow defined by
\begin{equation}
\label{i2}
\begin{cases}
\frac{d}{dt} \phi_t(x)=v\big(t,\phi_t(x)\big) \\
\phi_0(x)=x \in \RR.
\end{cases}
\end{equation}
In view of (\ref{i1}), we then have
\begin{equation}
\label{i3} \frac{d}{dt} \om\big(t,\phi_t(x)\big)\equiv 0,
\end{equation}
which means that $\om$ is constant along the characteristics. In the
general case of a vorticity $\om \in \lllll$, these computations may
be rigorously justified, so that the Eulerian formulation \eqref{i1}
and the Lagrangian one \eqref{i2}, \eqref{i3} turn out to be
equivalent.

Since equation \eqref{i1} governs the evolution of the vorticity
$\om$, it is natural to express the velocity
 $v$ in terms of $\om$. This can be done by taking
the orthogonal gradient in both terms in the relation $\om=\curl v$
and using that $v$ is divergence free. This yields $\na^\perp \om
=\Delta v$, so that under the additional constraint that $v$
vanishes at infinity, we have
\begin{equation}
\label{i4} v=K\ast \om.
\end{equation}
Here $\ast$ denotes the convolution product and $K : \RR\setminus
\{0\} \rightarrow \RR$ stands for the Biot-Savart Kernel defined by
\begin{equation}
\label{i5}
K(x)=\frac{1}{2\pi} \frac{x^{\bot}}{|x|^2},\qquad x\neq 0,
\end{equation}
where $ (x_1,x_2)^{\bot}=(-x_2,x_1). $ When the  vorticity tends to
be concentrated at points, one may modify equation \eqref{i1}
according to formulas \eqref{i4} and \eqref{i5} into a system of
ordinary differential equations, called point vortex system, which
governs the motion of these points. A rigorous justification for
this system has been carried out in \cite{article2}. It is proved
there that if the initial vorticity $\om_0$ is close to the weighted
sum of Dirac masses $\sum d_i \delta_{z_i}$ in a certain sense, then
$\om(t)$ remains close to $\sum d_i \delta_{z_i(t)}$ for all time,
where the vortices $z_i(t)$ evolve according to the point vortex
system.

In the early 90s, Marchioro and Pulvirenti
\cite{mar_pul,livrejaune} investigated the mixed problem in which
the vorticity is composed of an $L^\infty$ part and a sum of Dirac
masses. They obtained the so-called vortex-wave system, which couples
the usual point vortex system and the classical Lagrangian
formulation for the two-dimensional fluid dynamics. In the case of a
single point vortex (which will be the case studied here), these
authors obtained the global existence of solutions of the
vortex-wave system in Lagrangian formulation.
\begin{definition}[Lagrangian solutions]
\label{defLAGR} Let $\om_0 \in \llll$ and $z_0\in \RR$. We say that
the triple $(\om,z,\phi)$ is a global Lagrangian solution to the
vortex-wave system with initial condition $(\om_0,z_0)$ if $ \om \in
\lllll$, $v=K\ast \om \in C(\R^+ \times \RR)$ and
\begin{equation*}
z:\R^+ \to \RR,\qquad \phi:\R^+\times \RR\setminus \{z_0\} \to
\RR
\end{equation*}
are such that $z\in C^1(\R^+,\RR)$, $\phi(\cdot,x) \in
C^1(\R^+,\RR)$ for all $x\neq z_0$ and satisfy
\begin{equation}\tag{LF}
\begin{cases}
v(\cdot,t)=(K\ast \omega)(\cdot,t),\\
 \dot{z}(t)=v(t,z(t)),\\
 z(0)=z_0,\\
\dot{\,\phi}_t(x)=v(t,\phi_t(x))+
K\big(\phi_t(x)-z(t)\big),\\
\phi_0(x)=x, \; x\neq z_0 ,\\
\omega(\phi_t(x),t)=\omega_0(x),
\end{cases}
\label{mixte}
\end{equation}
where $\phi_t=\phi(t,\cdot)$. In addition, for all $t$, $\phi_t$ is an
homeomorphism from $\RR\setminus\{z_0\}$ into
$\RR\setminus\{z(t)\}$ that preserves Lebesgue's measure.
\end{definition}

This system involves two kinds of trajectories.
 The point vortex $z(t)$ moves under the velocity field $v$ produced by the
regular part $\om$ of the vorticity. This regular part and the
vortex point give rise to a smooth flow $\phi$ along which $\om$ is
constant. The main difference with the classical Euler dynamics is
the presence of the field $K(x-z(t))$, which is singular at the
point vortex but smooth elsewhere. Marchioro and Pulvirenti
\cite{mar_pul} proved global existence for \eqref{mixte}. The proof
mainly relies on estimates involving the distance between
$\phi_t(x)$ and $z(t)$ and uses almost-Lipschitz regularity for
$v=K\ast \om$ and the explicit form of $K$. It is shown in
particular that a characteristic starting far apart from the point
vortex cannot collide with $z(t)$ in finite time. Consequently, the
singular term $K(\phi_t(x)-z(t))$ in \eqref{mixte} remains
well-defined for all time.

The notion of Lagrangian solutions is rather strong. One can define
a weaker notion of solutions: solutions in the sense of distributions
of the PDE (without involving the flow $\phi$). We call these
Eulerian solutions and we define them here below.

\begin{definition}[Eulerian solutions]
\label{defPDE}
 Let $\om_0\in L^1\cap L^\infty(\RR)$ and $z_0\in \RR$.
We say that $(\om,z)$ is a global Eulerian solution of the
vortex-wave equation with initial condition $(\om_0,z_0)$ if
\begin{equation*}
\om\in \lllll,\qquad z\in C(\R^+,\RR)
\end{equation*}
and if we have in the sense of distributions
\begin{equation}\tag{EF}
\begin{cases}
\label{t1} \dt \om +\diver((v+H) \om)=0, \\
 \om(0)=\om_0, \\
\dot{z}(t)=v\big(t,z(t)\big),\qquad z(0)=z_0,
\end{cases}
\end{equation}
where $v$ and $H$ are given by
\begin{equation*}
v(t,\cdot)=K\ast_{x} \om(t),\qquad H(t,\cdot)=K(\cdot-z(t)).
\end{equation*}
In other words, we have \footnote{By virtue of Lemma
\ref{loglip}, the field defined by $v=K\ast \om$ belongs to
$L^\infty(\R^+ \times \RR)$. On the other hand, $H$ belongs to
$L^1_{\loc}(\R^+ \times \RR)$, so that this definition makes sense.} for any test function $\p \in \mathcal{D}(\R^+\times \RR)$
\begin{equation*}
\begin{split}
 -\int_{\RR} \om_0(x) \p(0,x)\,dx&=\int_{\R^+}
\int_{\RR} \om(\dt \p+ (v+H)\cdot \nabla \p)\,ds\,dx,
\end{split}
\end{equation*}
and \footnote{We will see in Lemma \ref{loglip} and Proposition \ref{prop : cont-velocity} that $v(t)$ is defined for all time and is continuous in the space variable.}
\begin{equation*}
z(t)=z_0+\int_0^t v(s,z(s))\,ds
\end{equation*} for all $t\in \R^+$.
\end{definition}

This kind of Eulerian solutions appears for example in \cite{ift_lop}. In that paper, a solution of the Euler equation with a fixed point vortex is obtained as the limit of the Euler equations in the exterior of an obstacle that shrinks to a point. The regularity of the limit solution obtained in \cite{ift_lop} is not better than the one given in Definition \ref{defPDE}.

In this paper, we are concerned with the problems of uniqueness of Eulerian and Lagrangian solutions and with the related question of equivalence of Definitions \ref{defLAGR} and  \ref{defPDE}.

We will first prove the following Theorem, clarifying that a Lagrangian solution is an Eulerian solution.

\begin{theorem}
\label{thmequivalence}
 Let $\om_0\in L^1\cap L^\infty(\RR)$ and
$z_0\in \RR$. Let $(\om,z,\phi)$ be a global Lagrangian solution of
the vortex-wave system with initial condition $(\om_0,z_0)$. Then
$(\om,z)$ is a global Eulerian solution.
\end{theorem}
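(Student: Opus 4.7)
\medskip

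\noindent\textbf{Proof plan.} The strategy is the standard one for transport equations: represent the evaluation of $\om(t,\cdot)$ on a test function as a pull-back by the flow, differentiate in time, and push forward again. Fix $\p\in \mathcal{D}(\R^+\times\RR)$ and set
\[
F(t) := \int_{\RR} \om(t,y)\,\p(t,y)\,dy.
\]
Since $\phi_t$ is measure-preserving on $\RR\setminus\{z_0\}$ (a set of full measure) and $\om(t,\phi_t(x))=\om_0(x)$, the change of variables $y=\phi_t(x)$ gives
\[
F(t)=\int_{\RR}\om_0(x)\,\p(t,\phi_t(x))\,dx.
\]
For each $x\neq z_0$, the map $s\mapsto \phi_s(x)$ is $C^1$ with $\dot\phi_s(x)=v(s,\phi_s(x))+K(\phi_s(x)-z(s))$, so the chain rule yields
\[
\frac{d}{ds}\p(s,\phi_s(x)) = \big(\dt\p + \na\p\cdot(v+H)\big)\big(s,\phi_s(x)\big).
\]
Integrating from $0$ to $t$, multiplying by $\om_0(x)$, applying Fubini, and finally undoing the change of variables through $y=\phi_s(x)$ together with $\om(s,\phi_s(x))=\om_0(x)$, I arrive at
\[
F(t)-F(0)=\int_0^t\!\int_{\RR}\om(s,y)\,\big(\dt\p+(v+H)\cdot\na\p\big)(s,y)\,dy\,ds.
\]
Since $\p$ has compact support in time, $F(t)$ vanishes for $t$ large, while $F(0)=\int \om_0(x)\p(0,x)\,dx$; this is exactly the distributional formulation of \eqref{t1}. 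The ODE $z(t)=z_0+\int_0^t v(s,z(s))\,ds$ is immediate by integration of the Lagrangian equation $\dot z=v(t,z)$, and the regularity $\om\in\lllll$, $z\in C(\R^+,\RR)$ is part of the Lagrangian data.

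\medskip

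\noindent The only non-routine step is the application of Fubini, which I expect to be the main obstacle because of the Biot--Savart singularity hidden in $\dot\phi_s$. The terms containing $\dt\p$ or $\na\p\cdot v$ are harmless, since $\p$ has compact support in $(t,y)$, $\om_0\in L^1\cap L^\infty$, and $v\in L^\infty$ by Lemma \ref{loglip}. For the delicate term I bound
\[
\int_{\RR}\bigl|\om_0(x)\,\na\p(s,\phi_s(x))\cdot K(\phi_s(x)-z(s))\bigr|\,dx
\leq \|\om_0\|_\infty\int_{\RR}|\na\p(s,y)|\,|K(y-z(s))|\,dy,
\]
where I used $|\om_0|\leq\|\om_0\|_\infty$ and the measure-preservation of $\phi_s$. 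The right-hand side is finite because $\na\p(s,\cdot)$ is compactly supported and $K\in L^1_{\loc}(\RR)$; moreover, since $z\in C(\R^+,\RR)$, a uniform neighbourhood of $z(s)$ lies in a fixed compact set for $s$ in any $[0,T]$, so this quantity is bounded uniformly on $[0,T]$. Therefore the $(s,x)$-integrand is in $L^1([0,T]\times\RR)$ for every $T$, Fubini applies, and the change of variables in the spatial integral is legitimate. The rest is bookkeeping.
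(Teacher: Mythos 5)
Your proof is correct, but it takes a genuinely different route from the paper's. The paper starts from the same formal identity $f(t)=\int_{\RR}\om_0(x)\,\p(t,\phi_t(x))\,dx$ but then \emph{differentiates under the integral sign}; to justify this in the presence of the Biot--Savart singularity it multiplies the test function by a radial cut-off $\chi_\delta(x-z(t))$ supported away from the vortex, performs the computation there, and passes to the limit $\delta\to0$. The limit works only because of the orthogonality $K(y)\cdot y=0$, which makes the dangerous commutator term $H\cdot\na\chi_\delta(x-z(t))$ vanish identically (equation \eqref{cut.H}); the remaining error is controlled by $\|\na\chi_\delta\|_{L^1}\to0$ from \eqref{cut-off2}. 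You instead integrate the chain rule along each trajectory first (fundamental theorem of calculus in $s$ for fixed $x\neq z_0$) and then exchange $\int_0^t ds$ with $\int dx$ by Fubini, which you justify correctly: pushing the absolute value of the singular term back through the measure-preserving map $\phi_s$ reduces everything to $K\in L^1_{\loc}(\RR)$ on a compact set, uniformly for $s$ in a bounded interval. This sidesteps both the cut-off and the structural identity $K(y)\cdot y=0$, so your argument is more elementary and slightly more robust (it would survive replacing $K$ by any locally integrable kernel). What the paper's cut-off technique buys is reusability: the same $\chi_\delta$ construction and the identity \eqref{cut.H} are the workhorses of the renormalization arguments in Section 3 (Lemmas \ref{commutators} and \ref{renorm1}), where no Lagrangian flow is available and your trajectory-wise integration has no analogue. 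The only points you leave implicit are the joint measurability of $(s,x)\mapsto\phi_s(x)$ (which follows from continuity in each variable separately) and the fact that the ODE for $\phi_t(x)$ presupposes $\phi_t(x)\neq z(t)$ for all $t$, but both are part of the structure guaranteed by Definition \ref{defLAGR}, so this is not a gap.
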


We turn next to our main purpose and investigate uniqueness for Lagrangian or Eulerian solutions.

Uniqueness for Lagrangian solutions can be easily achieved when the
support of $\om_0$ does not meet $z_0$; in that case, the support of
$\om(t)$ never meets $z(t)$ and the field $x\mapsto
K(\phi_t(x)-z(t))$ is Lipschitz on $\textrm{supp}\ \om_0$.

Another situation that has been studied is the case where the
vorticity is initially constant near the point vortex. Marchioro and
Pulvirenti \cite{mar_pul} suggested with some indications that
uniqueness for Lagrangian solutions should hold in that situation. This was proved by
Starovoitov \cite{russe} under the supplementary assumption that $\om_0$ is Lipschitz. In
this paper, we treat the general case where the initial vorticity is
constant near the point vortex $z_0$ and belongs to $\llll$. More
precisely, we prove the following

\begin{theorem}
\label{theo}  Let $\om_0\in L^1\cap L^\infty(\RR)$ and $z_0\in \RR$
such that there exists $R_0>0$ and $\a \in\R$ such that
\begin{equation*}
\om_0 \equiv \a  \: \: \textrm{on }\: B(z_0,R_0).
\end{equation*}
Suppose in addition that  $\om_0$ has compact support. Then there
exists a unique Eulerian solution of the vortex-wave system with
this initial data.
\end{theorem}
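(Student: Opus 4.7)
The strategy is a Yudovich-type $L^2$ argument for the velocity difference, adapted to the singular field produced by the point vortex. The assumption that $\om_0\equiv \a$ on $B(z_0,R_0)$ is used to give $v$ extra regularity near $z(t)$, which is precisely what is needed to tame the singularity of $H=K(\cdot-z(t))$ in the energy estimate.

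Step 1: Persistence of the constant region. Let $(\om,z)$ be an Eulerian solution and $H(t,\cdot)=K(\cdot-z(t))$. Away from $z(t)$, the field $v+H$ is almost-Lipschitz in $x$ and bounded uniformly in $t$ (by Lemma \ref{loglip} and smoothness of $K$ off the origin), hence generates a unique measure-preserving flow $\phi_t$ on $\RR\setminus\{z(t)\}$ via DiPerna--Lions theory. The renormalized transport property yields $\om(t,\phi_t(x))=\om_0(x)$ almost everywhere, so $\om(t)\equiv \a$ on $\phi_t(B(z_0,R_0)\setminus\{z_0\})$. An area-preservation/continuity argument, using that $\phi_t$ extends to send $z_0$ to $z(t)$, shows that this open set contains an open disk $B(z(t),r(t))$ with $r(t)>0$ for every $t$. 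Compact support of $\om_0$ also propagates.

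Step 2: Velocity formulation and energy estimate. For two Eulerian solutions $(\om_i,z_i)_{i=1,2}$ with the same initial data, set $\bar v=v_1-v_2$, $\bar\om=\om_1-\om_2$ and $\bar H=H_1-H_2$. Using $v=K\ast \om$ together with the vorticity PDE one derives, away from $z(t)$, a velocity equation of the form
\begin{equation*}
\dt v + \big((v+H)\cdot \na\big) v = -\na p.
\end{equation*}
Subtracting the two such equations and pairing with $\bar v$ in $L^2$ yields, after integration by parts and use of $\diver \bar v=0$,
\begin{equation*}
\tfrac{1}{2}\frac{d}{dt}\|\bar v(t)\|_{L^2}^2 = -\int_{\RR} \bar v\cdot(\bar v\cdot\na v_2)\,dx - \int_{\RR} \bar v\cdot(\bar H\cdot\na v_2)\,dx + R(t),
\end{equation*}
where $R(t)$ collects pressure and symmetric transport terms that vanish or are benign. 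The first integral is handled Yudovich-fashion using the log-Lipschitz regularity of $v_2$; the critical issue is the second one, since $\bar H\sim K(\cdot-z_1)-K(\cdot-z_2)$ fails to be square-integrable near each vortex.

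Step 3: Lipschitz structure at the vortices. On $B(z_i(t),r(t))$, the constancy $\om_i\equiv \a$ forces $v_i$ to equal the rigid rotation $\tfrac{\a}{2}(x-z_i(t))^\perp$ plus a harmonic correction, hence $v_i$ is Lipschitz in a neighborhood of $z_i(t)$; in particular $\na v_2$ is bounded near the vortices (assuming, by a bootstrap, $|\bar z(t)|$ small relative to $r(t)$). The dangerous integral is then split over a neighborhood of $\{z_1,z_2\}$, where the $1/|x-z_i|$ singularity of $\bar H$ is integrable against bounded $\na v_2$ and produces a bound of order $|\bar z|$ up to logarithmic factors, and the complement, where $\bar H$ is itself bounded by $|\bar z|$ times a logarithmic factor. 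Coupling the resulting estimate with
\begin{equation*}
|\bar z(t)|\leq \int_0^t \big|v_1(s,z_1(s))-v_2(s,z_2(s))\big|\,ds
\end{equation*}
and the modulus of continuity of the velocities, one obtains an Osgood-type differential inequality for the joint quantity $\|\bar v(t)\|_{L^2}^2+|\bar z(t)|$, forcing both to vanish and yielding uniqueness.

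The main obstacle is Step 3: the Biot--Savart differences $K(\cdot-z_1)-K(\cdot-z_2)$ are not in $L^2_{\loc}$, so the naive $L^2$ energy estimate would blow up at the vortex locations. The hypothesis $\om_0\equiv \a$ on $B(z_0,R_0)$ is used precisely to upgrade the regularity of $v_i$ from log-Lipschitz to Lipschitz in a neighborhood of the vortex, which is exactly the gain needed to close the energy estimate. A secondary technical point is the rigorous justification of the velocity formulation and its relation to the Eulerian vorticity formulation for this class of solutions, which is where Theorem \ref{thmequivalence} and the Lemma \ref{loglip} regularity of $v$ enter.
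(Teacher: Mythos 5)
Your overall strategy coincides with the paper's: persistence of the constant-vorticity region around the vortex, an $L^2$ energy estimate for $v_1-v_2$ coupled with $|z_1-z_2|^2$, the key observation that constancy of the vorticity near the vortex makes the velocity difference harmonic (equivalently, each $v_i$ Lipschitz) in a neighbourhood of the vortices so that the non-square-integrable singularity of $H_1-H_2$ can be absorbed, and a Yudovich-type iteration $r(T)\leq (CT)^p$ with $p\to\infty$ to conclude. Steps 2 and 3 of your plan are, in substance, the paper's Propositions on the velocity formulation, the harmonicity lemma, and the Gronwall estimate.

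The one place where your argument as written has a genuine gap is Step 1. For an \emph{Eulerian} solution you cannot simply invoke a DiPerna--Lions flow on $\RR\setminus\{z(t)\}$ and then assert by an ``area-preservation/continuity argument'' that $\phi_t\big(B(z_0,R_0)\setminus\{z_0\}\big)$ contains a disk $B(z(t),r(t))$ with $r(t)>0$. Measure preservation alone does not prevent the image of the punctured disk from being pinched arbitrarily close to $z(t)$; in the Lagrangian setting the lower bound on $|\phi_t(x)-z(t)|$ requires the quantitative Marchioro--Pulvirenti estimate exploiting $K(X)\cdot X=0$, and even constructing the flow is delicate since $v+H$ fails to be $W^{1,1}_{\loc}$ across the vortex trajectory. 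The paper sidesteps all of this with a purely Eulerian argument: it first proves that Eulerian solutions are renormalized (commutator estimates localized away from the vortex trajectory, with \emph{radial} cutoffs so that $H\cdot\nabla\chi_\delta(x-z(t))\equiv 0$), and then applies the renormalization identity to $\beta(s)=(s-\a)^2$ with a shrinking test function $\F_0(|x-z(t)|/R(t))$, which yields the explicit radius $R(t)=\exp\big(1-(1-\ln R_0)e^{2Ct}\big)$ directly from the almost-Lipschitz modulus of $v$. Your Step 1 should be replaced by such an argument (or by the full no-collision estimate), after which the rest of your proof closes as in the paper.
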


In order to prove Theorem \ref{theo}, we first show that if
$(\om,z)$ is an Eulerian solution, then $\om$ is a renormalized
solution in the sense of DiPerna-Lions \cite{dip-li} of its
transport equation. This in turn implies that if the vorticity is
initially constant near the point vortex, then this remains true for
all time. We then take advantage of the weak formulation \eqref{t1}
to derive a partial differential equation satisfied by the velocity
$v=K\ast \om$. In order to compare two solutions, one not only has
to compare the two regular parts, but also possibly the diverging
trajectories of the two vortices. Given two Eulerian solutions
$(\om_1,z_1)$ and $(\om_2,z_2)$, we therefore introduce the quantity
\begin{equation*}
r(t)=|\tilde{z}(t)|^2+\|\tilde{v}(t)\|_{L^2(\RR)}
\end{equation*}
where $\tilde{z}=z_1-z_2$, $\tilde{\om}=\om_1-\om_2$ and
$\tilde{v}=v_1-v_2=K\ast \tilde{\om}$. Since $\tilde{\om}$ vanishes
in a neighborhood of the point vortex, the velocity $\tilde{v}$ has
to be harmonic in this neighborhood. This provides in particular a
control of  its $L^{\infty}$ norm (as well as the $L^{\infty}$ norm
for the gradient) by its $L^2$ norm, which ultimately yields a
Gronwall-type estimate for $r(t)$ and allows to prove that it
vanishes.

\medskip

Finally, although we have chosen to restrict our attention to Eulerian solutions, we point out in Section \ref{final} that the renormalization property established for the linear transport equation  can be used to show the converse of Theorem \ref{thmequivalence}. This implies that Definitions \ref{defLAGR} and \ref{defPDE} are equivalent for any $\om_0 \in L^1\cap L^\infty(\RR)$, even if the vorticity is not initially constant in a neighborhood of the point vortex.

\section{Lagrangian implies Eulerian}

\label{section2}

We first briefly recall some remarkable properties of the convolution by the Biot-Savart Kernel $K$. The proofs are standard and may be found in \cite{maj-bert,livrejaune}. We begin with the Calder\'on-Zygmund
inequality.

\begin{lemma}
\label{calderon} Let $f \in \llll$ and $g=K\ast f$ so that $\curl g=f$ and $\diver g=0$. Then for all
$2\leq p<+\infty$, we have
\begin{equation*}
\|\nabla g\|_{L^p(\RR)} \leq Cp \|f\|_{L^p(\RR)},
\end{equation*}
where $C$ is some universal constant.
\end{lemma}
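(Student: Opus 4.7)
The plan is to recognize $\nabla g$ as a Calder\'on--Zygmund singular integral operator applied to $f$ and to invoke a form of the Calder\'on--Zygmund $L^p$-theorem that tracks the constant explicitly.

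First I would differentiate the kernel. Since $K(x) = \frac{1}{2\pi}\frac{x^\perp}{|x|^2}$ is smooth and positively homogeneous of degree $-1$ away from the origin, each partial derivative $\partial_i K_j$ has the form $\Omega_{ij}(x/|x|)/|x|^2$ with $\Omega_{ij}$ smooth on $S^1$ and of mean zero, $\int_{S^1}\Omega_{ij}\,d\sigma = 0$; a direct computation (e.g., $\partial_1 K_1(x) = x_1 x_2/(\pi|x|^4)$) confirms this. Hence $\partial_i K_j$ is a standard Calder\'on--Zygmund kernel on $\R^2$. A distribution-theoretic computation (or approximating $f$ by $C_c^\infty$ functions and passing to the limit) shows that
\[
\partial_i g_j(x) = \mathrm{p.v.}\!\int_{\RR} \partial_i K_j(x-y)\,f(y)\,dy + c_{ij}\,f(x)
\]
for suitable constants $c_{ij}$ arising from the boundary term produced when differentiating a convolution with a kernel that fails to be $C^1$ at the origin.

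Next I would apply the Calder\'on--Zygmund theorem to the principal-value operator $T_{ij}f := \mathrm{p.v.}\!\int \partial_i K_j(x-y)\,f(y)\,dy$. Its $L^2$-boundedness follows from Plancherel, since the cancellation condition implies that $\widehat{\partial_i K_j}$ is a bounded function; and $T_{ij}$ is of weak type $(1,1)$ by the classical Calder\'on--Zygmund decomposition. Marcinkiewicz interpolation between these two endpoint bounds yields the strong-type $(p,p)$ estimate for every $p \in (1,\infty)$, with operator norm of order $p$ as $p \to \infty$. Together with the trivial $L^p$-estimate on the additive term $c_{ij}f$, this gives $\|\nabla g\|_{L^p(\RR)} \leq C p\,\|f\|_{L^p(\RR)}$ for every $p \in [2,\infty)$. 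The hypothesis $f \in L^1 \cap L^\infty$ ensures $f \in L^p$ for all such $p$, so no regularization issue arises.

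The only delicate point --- more a matter of care than a genuine obstacle --- is the linear growth in $p$ of the operator norm. This is the standard sharp dependence in Marcinkiewicz interpolation applied between weak-type $(1,1)$ and strong-type $(2,2)$ for Calder\'on--Zygmund operators. Equivalently, one can observe that $\nabla g$ is, up to sign, a combination of double Riesz transforms of $f$ through the identity $g = \pm\nabla^\perp(-\Delta)^{-1}f$, and appeal to the classical bound $\|R_i R_j f\|_{L^p} \lesssim p\,\|f\|_{L^p}$ valid for $p \geq 2$. Either route furnishes the claim with a universal constant $C$.
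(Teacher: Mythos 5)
Your proof is correct and is essentially the standard Calder\'on--Zygmund argument; the paper itself gives no proof of this lemma, referring instead to \cite{maj-bert,livrejaune}, where exactly this identification of $\nabla g$ with a singular integral (double Riesz transforms) of $f$ plus a pointwise multiple of $f$, together with the classical $O(p)$ operator-norm bound, is used. The one point to state more carefully is that Marcinkiewicz interpolation between weak $(1,1)$ and strong $(2,2)$ directly covers only $1<p<2$; the range $p>2$ with constant $O(p)$ then follows by duality (the adjoint is again a Calder\'on--Zygmund operator), which is what your closing remark via $\|R_iR_jf\|_{L^p}\lesssim p\,\|f\|_{L^p}$ for $p\geq 2$ implicitly invokes.
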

The following Lemma will be very useful in our further analysis.
\begin{lemma}
\label{loglip} Let $f \in \llll$ and $g=K\ast f$. Then $g$ satisfies
\begin{eqnarray}
\label{linfini_bound}
 \|g\|_{L^{\infty}} \leq
C\big(\|f\|_{L^{\infty}}+\|f\|_{L^1}\big).
\end{eqnarray}
Moreover,
\begin{eqnarray}
\label{loglipschitz} |g(x)-g(y)|\leq C(
\|f\|_{L^{\infty}},\|f\|_{L^1})\, \f\big( |x-y|\big),\ \forall
x,y\in \RR,
\end{eqnarray}
where $\varphi$ is the continuous, concave and non-decreasing
function defined by
\begin{eqnarray*}
\varphi(z) =\left\{\begin{array}{ll}z(1-\ln(z))&\textrm{if}\qquad 0\leq z<1 \\
1 &\textrm{if} \qquad z\geq 1.
\end{array}\right.
\end{eqnarray*}
\end{lemma}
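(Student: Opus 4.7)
My plan is to prove both estimates by splitting the Biot-Savart convolution integral into a near-field piece (handled by the $L^\infty$ norm of $f$) and a far-field piece (handled by the $L^1$ norm), exploiting the two scales on which $K(x)=\tfrac{1}{2\pi}x^\perp/|x|^2$ behaves badly: it is integrable near $0$ only in a borderline sense, and it decays only like $1/|x|$ at infinity.

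For the $L^\infty$ bound \eqref{linfini_bound}, I would write
\begin{equation*}
g(x)=\int_{|x-y|<1} K(x-y)f(y)\,dy+\int_{|x-y|\ge 1} K(x-y)f(y)\,dy.
\end{equation*}
On the first piece, $|K(x-y)|\le (2\pi|x-y|)^{-1}$, and using $\int_{B(0,1)}|z|^{-1}\,dz=2\pi$ in polar coordinates I bound the contribution by $C\|f\|_{L^\infty}$. On the second piece, $|K(x-y)|\le(2\pi)^{-1}$, so the contribution is bounded by $C\|f\|_{L^1}$. Adding gives \eqref{linfini_bound}.

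For the log-Lipschitz estimate \eqref{loglipschitz}, set $d=|x-y|$. The case $d\ge 1$ follows at once from \eqref{linfini_bound} since $\varphi(d)=1$ and $|g(x)-g(y)|\le 2\|g\|_{L^\infty}$. For $d<1$ I write
\begin{equation*}
g(x)-g(y)=\int_{\RR}\bigl(K(x-z)-K(y-z)\bigr)f(z)\,dz
\end{equation*}
and split the $z$-integral into three regions: (i) $|x-z|\le 2d$, where I just use the triangle inequality $|K(x-z)-K(y-z)|\le |K(x-z)|+|K(y-z)|$ and integrate the $1/|\cdot|$ singularity over a ball of radius $O(d)$, picking up $Cd\|f\|_{L^\infty}$; (ii) $2d\le|x-z|\le 1$, where on this region $|y-z|\ge|x-z|-d\ge|x-z|/2$ so the mean-value estimate $|K(x-z)-K(y-z)|\le Cd/|x-z|^2$ is valid, and integrating $d/r^2$ times $r\,dr$ from $2d$ to $1$ produces the borderline term $Cd(1-\ln(2d))\|f\|_{L^\infty}$; (iii) $|x-z|\ge 1$, where the same mean-value bound applied together with $|x-z|\ge 1$ gives $|K(x-z)-K(y-z)|\le Cd$, and pairing with $\|f\|_{L^1}$ yields $Cd\|f\|_{L^1}$. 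Summing the three contributions and absorbing the linear terms into $d(1-\ln d)$ gives the desired $C(\|f\|_{L^\infty},\|f\|_{L^1})\,\varphi(d)$.

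The only delicate point is region (ii), where I must verify that the restriction $|x-z|\ge 2d$ really justifies the gradient estimate $|\nabla K|\le C/|\cdot|^2$ along the entire segment from $x-z$ to $y-z$; this is precisely why the threshold $2d$ (rather than just $d$) is chosen, since it guarantees $|y-z|\ge d$ and hence that the segment stays away from the singularity of $K$. Everything else is a routine polar-coordinate computation, and the logarithmic factor arising from $\int_{2d}^1 dr/r$ is exactly what forces the modulus of continuity $\varphi$ rather than Lipschitz continuity.
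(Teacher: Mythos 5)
The paper does not actually prove this lemma --- it defers to the standard references \cite{maj-bert,livrejaune} --- and your argument is precisely the classical proof given there: a near-field/far-field splitting for \eqref{linfini_bound}, and the three-region decomposition (ball of radius $2d$, intermediate annulus producing the logarithm, far field paired with $\|f\|_{L^1}$) for \eqref{loglipschitz}. The one loose end is region (iii): for $1/2<d<1$ the segment from $x-z$ to $y-z$ is only guaranteed to stay at distance $1-d$ from the singularity, so the asserted bound $|K(x-z)-K(y-z)|\le Cd$ does not hold with a uniform constant; this is harmless because for $d\ge 1/2$ one has $\varphi(d)\ge\varphi(1/2)>0$ and \eqref{loglipschitz} already follows from \eqref{linfini_bound}, but that reduction should be stated so that the mean-value step is only invoked for, say, $d\le 1/2$.
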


From now on, we will denote by $\mathcal{AL}$ the set of almost-Lipschitz functions from $\RR$ into $\RR$, that is those for which
$$
|g(x)-g(y)|\leq C\f(|x-y|),\qquad x,y\in \RR
$$
 with some constant $C$, and by
$L^\infty(\mathcal{AL})$ the set of functions $v=v(t,x):\R \times
\RR \mapsto \RR$ satisfying
$$
|v(t,x)-v(t,y)|\leq C\f(|x-y|),\qquad x,y\in \RR,\: \: t\in\R
$$
for some constant $C$ independent of $t$. Note that the uniform
bound \eqref{linfini_bound} holds true provided $f$  belongs to
$L^p\cap L^q$ for some $p<2$ and $q>2$. However, the
almost-Lipschitz estimate requires the assumption $f \in
L^{\infty}$.

In our situation, we will always deal with $f=\om(t)$ and
$v=g=K\ast_x \om(t)$, where $\om \in \lllll$. For this reason, the
estimates above will actually hold uniformly with respect to time.\\

Finally, we define $\chi_0:\RR \to \R$ to be a smooth, \emph{radial} cut-off map such that
\begin{equation}
\begin{split}
 \label{cut-off}
\chi_0\equiv 0\:\:\textrm{on} \:\: B(0,\frac{1}{2}),\quad
 \chi_0\equiv 1\:\: \textrm{on}\:\:B(0,1)^c ,\quad
0\leq \chi_0\leq 1.
\end{split}
\end{equation}
For a small and positive $\delta$, we set
\begin{equation*}
\chi_\delta(z)=\chi_0\left(\frac{z}{\delta}\right),
\end{equation*}
so that as $\delta$ goes to $0$, we have
\begin{equation}
\label{cut-off2}
\chi_\delta \to 1 \: \textrm{a.e.},\qquad \|\nabla\chi_\delta\|_{L^1(\RR)}\to 0.
\end{equation}
In the sequel, we denote by $u$ the velocity field
$$u\equiv v+H.$$
It is composed of an almost-Lipschitz part $v$ and of a part $H$ which is singular at the point vortex $z(t)$ and smooth outside. Clearly, multiplying any test function by $\chi_\delta \left(x-z(t)\right)$ provides a test function having compact support away from the singularity. This observation will allow us in the subsequent proofs to avoid the singularity and to first perform computations with smooth vector fields. In a second step, we will pass to the limit $\delta \to 0$. This will be readily achieved since we have, thanks to the explicit form of $H$ and the fact that $\chi_\delta$ is radial
\begin{equation}\label{cut.H}
H(t,x)\cdot\na \chi_\delta \left(x-z(t)\right)\equiv 0.
\end{equation}

\textbf{Proof of Theorem \ref{thmequivalence}}. Given $(\om,z,\phi)$
a solution of \eqref{mixte}, it actually suffices to show that
\begin{equation}
\label{formtourb}
\partial_t \om + \diver ((v+H) \om)=0
\end{equation}
in the sense of distributions on $\R^+\times \RR$. \\
We first give a formal proof of \eqref{formtourb}. Let us take a
$C^1$ function $\psi(t,x)$ and define
\begin{equation*}
f(t)=\int_{\RR} \om(t,y) \psi(t,y)\,dy.
\end{equation*}
We set $y=\phi_t(x)$. Since $\phi_t$ preserves Lebesgue's measure
for all time and since $\om$ is constant along the trajectories, we
get
\begin{equation*}
f(t)=\int_{\RR}\om_0(x)\psi (t,\phi_t(x))\,dx.
\end{equation*}
Differentiating with respect to time and using the ODE solved by
$\phi_t(x)$, this leads to
\begin{equation*}
f'(t)=\int_{\RR} \om_0(x) \big( \partial_t \psi +u \cdot \nabla
\psi\big)(t,\phi_t(x))\,dx.
\end{equation*}
Using the change of variables $y=\phi_t(x)$ once more yields
\begin{equation*}
f'(t)=\int_{\RR} \om(t,y) \big( \partial_t \psi+u\cdot \nabla
\psi\big)(t,y)\, dy,
\end{equation*}
which is \eqref{formtourb} in the sense of distributions. In order to justify
the previous computation, we need to be able to differentiate inside
the integral, and we proceed as follows.

Let $\psi=\psi(t,x)$ be any test function. For
$0<\delta<1$, we set
\begin{equation*}
\psi_{\delta}(t,x)\equiv \chi_\delta \left(x-z(t)\right)\,
\psi(t,x),
\end{equation*}
where $\chi_\delta$ is the map defined in \eqref{cut-off}.
Since we have  $
\psi_\delta(t)\equiv 0$ on the ball $B\left(z(t),\frac{\delta}{2}\right)$, we may apply the
previous computation to $\psi_\delta$, which yields for all $t$
\begin{equation}
\label{r1}
\begin{split}
\int_{\RR} \om(t,x) \psi_{\delta}(t,x)\,dx\, - &\int_{\RR} \om_0(x)
\psi_{\delta}(0,x)\,dx
\\& =\int_0^t \int_{\RR} \om \:( \partial_t \psi _\delta +u\cdot \nabla \psi_\delta )\,dx \,ds.
\end{split}
\end{equation}
We first observe that thanks to the pointwise convergence of $\psi_\delta(t,\cdot)$ to $\psi(t,\cdot)$ as $\delta \to 0$, we have
\begin{equation}
\label{r2}
\begin{split}
\int_{\RR} \om(t,x) \psi_{\delta}&(t,x)\,dx\, - \int_{\RR} \om_0(x)
\psi_{\delta}(0,x)\,dx \\
&\to \int_{\RR} \om(t,x) \psi(t,x)\,dx - \int_{\RR} \om_0(x)
\psi(0,x)\,dx
\end{split}
\end{equation}
by Lebesgue's dominated convergence Theorem. Then, we compute
\begin{equation*}
\begin{split}
\dt \psi_{\delta} +u\cdot \nabla
\psi_{\delta}=\chi_{\delta}(x-z)&( \dt \psi + u\cdot \nabla
\psi) \\ &+ \psi (-\dot{z}+v+H) \cdot \nabla
\chi_{\delta}(x-z).
\end{split}
\end{equation*}
Using \eqref{cut.H} and that $v$ is
uniformly bounded, we obtain
\begin{equation}
\begin{split}
\big|\int_{\RR}\om[\dt \psi_{\delta} +&u\cdot \nabla
\p_{\delta}-\chi_{\delta}(x-z)\big( \dt \psi + u\cdot \nabla
\psi\big)]\,dx \big|\\ &\leq C \|\psi \|_{L^{\infty}} \|v
\|_{L^{\infty}}\|\om \|_{L^{\infty}}\int_{\RR} |\nabla
\chi_{\delta}|(x)\,dx.
\end{split}
\label{r3}
\end{equation}
We now let $\delta$ tend to zero. Since $H$ is locally integrable, we observe that
\begin{equation*}
\begin{split}
\int_0^t\int_{\RR}\om\chi_{\delta}\big( \dt \psi + &u\cdot \nabla
\psi\big)\,dx\,ds\\
&\to \int_0^t \int_{\RR} \om ( \dt \psi + u\cdot \nabla
\psi)\,dx\,ds,
\end{split}
\end{equation*}
 so that the conclusion finally follows from  \eqref{cut-off2}, \eqref{r1}, \eqref{r2} and
\eqref{r3}.

\section{Uniqueness of Eulerian solutions}

\label{section3}

This section is devoted to the proof of Theorem \ref{theo}. A first
step in this direction consists in proving that if the vorticity is
initially constant near the point vortex, this remains true for all
time. This is proved in \cite{mar_pul} for any Lagrangian solution by estimating the
distance between the flow and the point vortex. In the present situation where Eulerian solutions are considered, this is achieved by proving that the vorticity of an
Eulerian solution is a renormalized solution in the sense of DiPerna
and Lions \cite{dip-li} of its transport equation.

We recall from Lemmas \ref{calderon} and \ref{loglip} that if
$(\om,z)$ is an Eulerian solution of \eqref{t1}, then the velocity
field defined by $v=K*\om$ satisfies
\begin{equation*}
v\in L^{\infty}(\R^+ \times \RR)\cap L^{\infty}\left(\R^+,
W_{\loc}^{1,1}(\RR)\right)\cap L^\infty(\mathcal{AL}).
\end{equation*}

\subsection{Renormalized solutions}

\label{subsection31}

In what follows, we consider equation \eqref{t1} as a linear
transport equation with given velocity field $u=v+H$ and trajectory
$z$. Our purpose here is to show that if $\om$ solves this linear
equation, then so does $\beta(\om)$ for a suitable smooth function
$\b$. When there is no point vortex, this directly follows from the
theory developed in \cite{dip-li} (see also \cite{dej} for more
details). The results stated in \cite{dip-li} hold for velocity
fields having enough Sobolev regularity; a typical relevant space is
$L_{\loc}^1\left(\R^+,W_{\loc}^{1,1}(\RR)\right)$. These results can
actually be extended to our present situation, thanks to the
regularity of $H$ away from the point vortex and to its special
form.

We define
\begin{equation*}
\Sigma=\{\big(t,z(t)\big),\:t\in \R^+\}
\end{equation*}
and denote by $\mathcal{G}$ its complement in $\R^+\times \RR$. \\
 The starting point in \cite{dip-li} and \cite{dej} is to look at
mollifiers
\begin{equation*}
\om_\eps=\rho_\eps\ast_x \om,\qquad \om_{\eps,\eta}=\om_\eps\ast_t
\theta_{\eta},
\end{equation*}
where $\rho_\eps$ and $\theta_\eta$ are standard regularizing
kernels on $\RR$ and $\R^+$ respectively. We also set, for $f\in
L_{\loc}^1(\R^+\times \RR)$, $f_\eta=f\ast_x\rho_\eta\ast_t
\theta_\eta$.  The following Lemma is a direct consequence of the Sobolev regularity of $v$ and the regularity of $H$ in $\mathcal{G}$.

\newcommand{\oo}{\om_{\eps,\eta}}
\newcommand{\rr}{r_{\eps,\eta}}
\newcommand{\aaa}{a_{\eps,\eta}}
\newcommand{\bbb}{b_{\eps,\eta}}
\newcommand{\ccc}{c_{\eps,\eta}}

\begin{lemma}[Commutators]
\label{commutators}
 Let $(\om,z)$ be an Eulerian solution of \eqref{t1}. Then we have
\begin{equation}
\label{smooth}
 \dt \oo + u_\eta \cdot \nabla \oo=\rr
\end{equation}
in the sense of distributions, where the remainder $\rr$ is defined
by
\begin{equation*}
\rr=u_\eta \cdot \nabla \oo-(u\cdot \nabla \om)_{\eps,\eta}
\end{equation*}
and satisfies
\begin{equation*}
\lim_{\eps \to 0} \left( \lim_{\eta\to 0} \rr \right) = 0 \qquad
\textrm{in }\: L_{\loc}^1(\mathcal{G}).
\end{equation*}
\end{lemma}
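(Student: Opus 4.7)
The plan is to derive the mollified identity \eqref{smooth} by convolving the weak formulation \eqref{t1} in space and time, and then to establish $\rr\to 0$ in $L^1_{\loc}(\mathcal{G})$ via a splitting $u=v+H$ that reduces the problem to the classical DiPerna--Lions commutator estimate for $v$ and to an elementary mollifier computation for $H$. The singularity of $H$ at the point vortex is the main obstacle to a naive global application of the commutator lemma, but it is entirely circumvented by the fact that convergence is only required on compact subsets of $\mathcal{G}$, which are by construction uniformly away from $\Sigma$; on such sets $H$ is as smooth as one wishes.

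First, I would derive \eqref{smooth}. Starting from $\dt\om+\diver((v+H)\om)=0$ and using $\diver v=0$ on $\RR$ together with $\diver H(t,\cdot)=0$ on $\RR\setminus\{z(t)\}$ (since $K$ is the orthogonal gradient of the logarithmic potential), the distributional identity $\diver(u\om)=u\cdot\nabla\om$ holds on $\mathcal{G}$. Convolving with $\rho_\eps$ in $x$ and $\theta_\eta$ in $t$ then gives, on any compact $K\subset\mathcal{G}$ and for $\eps+\eta$ smaller than $d(K,\Sigma)$,
\[
\dt\oo = -(u\cdot\nabla\om)_{\eps,\eta} = -\diver_x(u\om)_{\eps,\eta},
\]
so that adding and subtracting $u_\eta\cdot\nabla\oo$ produces \eqref{smooth}.

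Second, I would pass to the limit $\eta\to 0$ at fixed $\eps$. Since $u$ is continuous on $\mathcal{G}$ (by Lemma \ref{loglip} and the continuity of $z$), $u_\eta\to u$ in $L^p(K)$ for every finite $p$; since $\om_\eps$ is smooth in $x$, $\nabla\oo\to\nabla\om_\eps$ uniformly on $K$; and $(u\cdot\nabla\om)_{\eps,\eta}=\diver_x(u\om)_{\eps,\eta}\to\diver_x(u\om)_\eps=(u\cdot\nabla\om)_\eps$ in $L^p(K)$. Combining these convergences,
\[
\lim_{\eta\to 0}\rr = u\cdot\nabla\om_\eps - (u\cdot\nabla\om)_\eps =: r_\eps \quad\text{in } L^1(K).
\]

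Third, I would prove $r_\eps\to 0$ in $L^1(K)$ as $\eps\to 0$ by writing $r_\eps=r_\eps^v+r_\eps^H$. For $r_\eps^v$, Lemma \ref{calderon} ensures $v\in L^\infty(\R^+,W^{1,p}_{\loc}(\RR))$ for all $p<\infty$, and combined with $\om\in L^\infty$, the classical commutator lemma of DiPerna--Lions \cite{dip-li} yields $r_\eps^v\to 0$ in $L^1_{\loc}(\R^+\times\RR)$, hence on $K$. For $r_\eps^H$, on the $\eps$-enlargement of $K$ (for $\eps<d(K,\Sigma)/2$) the field $H(t,\cdot)$ is $C^\infty$ with bounds uniform in $t$, so writing the commutator as
\[
r_\eps^H(t,x)=\int\bigl[H(t,x)-H(t,y)\bigr]\cdot(\nabla\rho_\eps)(x-y)\,\om(t,y)\,dy
\]
and using a first-order Taylor expansion of $H$ together with $\diver H=0$ yields $r_\eps^H\to 0$ in $L^p(K)$ for all $p<\infty$. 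Summing the two contributions gives $\rr\to 0$ in $L^1(K)$, which concludes the proof.
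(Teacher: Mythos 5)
Your argument is correct and follows essentially the same route as the paper: both proofs exploit that convergence is only required on compact subsets of $\mathcal{G}$, at positive distance from $\Sigma$, where $H$ is regular, and both reduce the matter to the DiPerna--Lions commutator lemma. The only cosmetic difference is that the paper localizes by multiplying the field $u$ by a cutoff $\chi$ supported away from the vortex and checks $u\chi=v\chi+H\chi\in L^{\infty}_{\loc}\left(\R^+,W^{1,1}_{\loc}(\RR)\right)$ so as to apply Lemma II.1 of \cite{dip-li} to the whole localized field at once, whereas you split the commutator additively into a $v$-part (handled by the same lemma) and an $H$-part (handled by a direct Taylor expansion of the locally smooth, divergence-free field).
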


\begin{proof}
Let $K$ be a
compact subset of $\mathcal{G}$. Then
there exists $a>0$ such that
\begin{equation*}
|x-z(\tau)|\geq a,\qquad \forall (\tau,x)\in K.
\end{equation*}
We set $\chi(t,x)=\chi_0\left(\frac{x-z(t)}{a/2}\right)$,
where $\chi_0$ is defined in \eqref{cut-off}.
Clearly, we have for $\eta$ and $\eps$ sufficiently small with
respect to $a$ and for $(x,\tau)\in K$
\begin{equation*}
\rr(\tau,x)=(u\chi)_\eta  \cdot \nabla \oo-((u\chi)\cdot
\nabla\om)_{\eps,\eta}.
\end{equation*}
Firstly, the velocity $v$, and hence $v\chi$ belongs to  $L^\infty_{\loc}\left(\R^+,W_{\loc}^{1,1}(\RR)\right)$.
Secondly, thanks to the equality
\begin{equation*}
|H(\tau,x)-H(\tau,y)|=\frac{|x-y|}{2\pi |x-z(\tau)||y-z(\tau)|},
\end{equation*}
we infer that $ H \chi \in
L^{\infty}_{\loc}\left(\R^+,W_{\loc}^{1,1}(\RR)\right)$. Invoking Lemma II.1 in \cite{dip-li} or Lemma 1 in \cite{dej}, we obtain
\begin{equation*}
\lim_{\eps \to 0}\left( \lim_{\eta \to 0} \rr\right)=0\qquad
\textrm{in }\: L_{\loc}^1(K).
\end{equation*}
The Lemma is proved.
\end{proof}

The second step is to use the explicit form of $H$.

\begin{lemma}
 \label{renorm1}
Let $(\om,z)$ be a solution of \eqref{t1}. Let $\beta:\R \rightarrow
\R$ be a smooth function such that
\begin{equation*}
|\beta'(t)|\leq C(1+ |t|^p),\qquad \forall t\in \R,
\end{equation*}
for some $p\geq 0$. Then for all test function $\psi \in
\mathcal{D}(\R^+ \times \RR)$, we have
\begin{equation*}
\frac{d}{dt}\int_{\RR} \psi \beta(\om)\,dx=\int_{\RR} \beta(\om)
(\dt \psi +u\cdot \nabla \psi)\,dx \:\: \textrm{in }\:
L_{\loc}^1(\R^+).
\end{equation*}
\end{lemma}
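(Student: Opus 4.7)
I would start from the regularized equation \eqref{smooth} produced by Lemma \ref{commutators}. Since $\oo$ and $u_\eta$ are smooth, the usual chain rule yields
\begin{equation*}
\dt \b(\oo) + u_\eta \cdot \na \b(\oo) = \b'(\oo) \rr.
\end{equation*}
Because Lemma \ref{commutators} only controls $\rr$ away from the singular set $\Sigma$, I have to work with test functions supported in $\Gc$. Given $\p \in \mathcal{D}(\R^+ \times \RR)$, I therefore introduce the modified test function $\Psi_\delta(t,x) = \p(t,x) \chi_\delta(x-z(t))$ built from the radial cutoff of \eqref{cut-off}, which is $C^1$ in $t$ (since $z\in C^1$) and supported in a compact subset of $\Gc$.

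Multiplying the regularized equation by $\Psi_\delta$ and integrating by parts in space --- legitimate since $u_\eta$ is divergence-free, being the mollification of the divergence-free field $u=v+H$ --- gives
\begin{equation*}
\frac{d}{dt}\int \b(\oo)\Psi_\delta\,dx = \int \b(\oo)\bigl[\dt \Psi_\delta + u_\eta \cdot \na \Psi_\delta\bigr]\,dx + \int \Psi_\delta \b'(\oo) \rr\,dx.
\end{equation*}
I then pass to the limit $\eta \to 0$ and then $\e \to 0$. The last term vanishes by Lemma \ref{commutators}, since $\Psi_\delta$ has support in a compact subset of $\Gc$ and $\b'(\oo)$ is uniformly bounded (because $\|\oo\|_\infty \leq \|\om\|_\infty$ and $\b'$ has polynomial growth). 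For the remaining terms I use dominated convergence together with the a.e.\ convergence $\oo \to \om$ and the $L^1_{\loc}$ convergence $u_\eta \to u$ on the support of $\Psi_\delta$. The outcome is
\begin{equation*}
\frac{d}{dt}\int \b(\om)\Psi_\delta\,dx = \int \b(\om)\bigl[\dt \Psi_\delta + u \cdot \na \Psi_\delta\bigr]\,dx \quad \textrm{in } L^1_{\loc}(\R^+).
\end{equation*}

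It remains to let $\delta \to 0$. Expanding the derivatives of $\Psi_\delta$ gives
\begin{equation*}
\dt \Psi_\delta + u \cdot \na \Psi_\delta = \chi_\delta (\dt \p + u \cdot \na \p) + \p (u - \dot z) \cdot \na \chi_\delta(x-z(t)).
\end{equation*}
The identity \eqref{cut.H}, combined with $\dot z = v(\cdot, z(\cdot))$, collapses the last factor to $\p(v - v(\cdot, z)) \cdot \na \chi_\delta(x-z(t))$. The first piece converges to $\int \b(\om)(\dt \p + u \cdot \na \p)\,dx$ by dominated convergence, using the local integrability of $H$. For the second I invoke the almost-Lipschitz regularity of $v$ from Lemma \ref{loglip}: on the annular support of $\na \chi_\delta(\cdot -z(t))$ one has $|v(t,x) - v(t, z(t))| \leq C\f(\delta)$, while \eqref{cut-off2} yields $\|\na \chi_\delta\|_{L^1} = O(\delta)$, so this contribution is of order $\delta\f(\delta)$ and vanishes as $\delta \to 0$.

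The main difficulty lies in orchestrating the three limiting procedures (the mollification scales $\eta, \e$ and the cutoff scale $\delta$) so that the test function stays clear of $\Sigma$ at every stage and so that the chain-rule identity survives. The structural point that makes the final $\delta$-limit work is the radial symmetry of $\chi_\delta$: via \eqref{cut.H} it suppresses the singular contribution of $H$ in the commutator produced by the $\delta$-cutoff, leaving a remainder which the almost-Lipschitz modulus $\f$ of $v$ comfortably controls.
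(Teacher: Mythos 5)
Your argument follows the paper's own proof almost step for step: mollify, apply the chain rule to get $\dt\b(\om_{\eps,\eta})+u_\eta\cdot\na\b(\om_{\eps,\eta})=\b'(\om_{\eps,\eta})r_{\eps,\eta}$, localize away from $\Sigma$ with the radial cutoff $\chi_\delta$, dispose of the commutator via Lemma \ref{commutators}, and use \eqref{cut.H} together with $\|\na\chi_\delta\|_{L^1}\to 0$ in the final limit. The one step whose justification does not hold up is the claim that $\Psi_\delta(t,x)=\psi(t,x)\chi_\delta(x-z(t))$ is $C^1$ in $t$ ``since $z\in C^1$''. At this stage of the argument $z$ is only known to be Lipschitz: Definition \ref{defPDE} gives $z(t)=z_0+\int_0^t v(s,z(s))\,ds$ with $v$ bounded and continuous in the space variable, but continuity of $v$ in \emph{time} --- hence $z\in C^1$ --- is only obtained later (Proposition \ref{reciproque}), and its proof relies on the very renormalization property you are establishing, so invoking it here is circular. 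The paper circumvents this by replacing $z$ with a smooth approximation $z^n$ satisfying \eqref{e5}, working with $\psi_{\delta,n}(t,x)=\chi_\delta(x-z^n(t))\psi(t,x)$, and letting $n\to\infty$ \emph{before} $\delta\to 0$: the radial symmetry of $\chi_\delta$ still gives $H\cdot\na\chi_\delta(x-z^n)\to H\cdot\na\chi_\delta(x-z)\equiv 0$ in $L^1_{\loc}$, and the remaining term $\psi(v-\dot z^n)\cdot\na\chi_\delta$ is controlled by the uniform bound $|\dot z^n|\le\|v\|_{L^\infty}$ together with $\|\na\chi_\delta\|_{L^1}=O(\delta)$. (One could instead argue that Lipschitz regularity of $z$ suffices for an a.e.-in-$t$ identity, but that has to be said and used consistently, since $\dot z$ then only exists almost everywhere.)

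Apart from this, your use of the almost-Lipschitz modulus $\f$ to bound the cutoff commutator by $O(\delta\,\f(\delta))$ is a harmless refinement: since you may subtract the constant $\dot z(t)=v(t,z(t))$, the crude estimate $\|v\|_{L^\infty}\|\na\chi_\delta\|_{L^1}=O(\delta)$ already suffices, and it is what the paper uses. The structural points you identify --- the ordering of the limits $\eta\to 0$, $\eps\to 0$, ($n\to\infty$,) $\delta\to 0$, and the role of the radial symmetry of $\chi_\delta$ in suppressing the singular contribution of $H$ --- are exactly the ones on which the paper's proof turns.
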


\begin{proof}  We consider the mollifier $\oo$
defined above. Since $\oo$ is smooth, equality \eqref{smooth}
actually holds almost everywhere in $\R^+\times\RR$. Multiplying
\eqref{smooth} by $\beta'(\oo)$ yields
\begin{equation}
\label{e1} \dt \beta(\oo)+ u_\eta\cdot \nabla \beta(\oo)=\beta'(\oo)
\rr \qquad \textrm{a. e. in }\: \RR.
\end{equation}
We proceed now as in the proof of Theorem \ref{thmequivalence}. Let
$\psi \in \mathcal{D}(\R^+\times \RR)$ be any test function. Let
$\delta>0$ denote a small parameter and let $\chi_\delta$ be the
smooth radial map on $\RR$ defined in \eqref{cut-off}.

We next set
\begin{equation*}
\psi_{\delta}(t,x)=\chi_{\delta}\left(x-z(t)\right) \psi(t,x),
\end{equation*}
and
\begin{equation*}
\psi_{\delta,n}(t,x)=\chi_{\delta}\left( x-z^n(t)\right)\psi(t,x),
\end{equation*}
where $z^n(t)$ is a smooth approximation of $z(t)$. The functions $\psi_\delta$ and $\psi_{\delta,n}$ are supported in $\mathcal{G}$. Note that since
$z$ is Lipschitz with Lipschitz constant given by
$\|v\|_{L^{\infty}(\R^+\times \RR)}$, we may choose $z^n(t)$ so that
\begin{equation}
\label{e5}
 \sup_{t\in\R^+} |z(t)-z^n(t)|\leq \frac{1}{n} \|v\|_{L^{\infty}},\quad
\sup_{ t\in \R^+}|\dot{z}^n(t)|\leq \|v\|_{L^{\infty}}.
\end{equation}

Multiplying \eqref{e1} by
$\psi_{\delta,n}$ and integrating in space gives for all $t$
\begin{equation*}
\begin{split}
\frac{d}{dt} \int_{\RR} \psi_{\delta,n}(t)&\beta(\oo(t))\,dx=\int_{\RR} \beta'(\oo)\rr \psi_{\delta,n}\,dx\\
&+\int_{\RR} \beta(\oo)(\dt \psi_{\delta,n} +u_\eta\cdot \nabla
\psi_{\delta,n})\,dx.
\end{split}
\end{equation*}
Given $\delta>0$, we find $n$ sufficiently large so that $\frac{1}{n}\|v\|_{L^\infty}$ is small with respect to $\delta$.
We infer from the definition of $\chi_\delta$ and \eqref{e5}
that $\psi_{\delta,n}$ is compactly supported in $\mathcal{G}$. We
may thus invoke Lemma \ref{commutators}, the assumption on $\b$ and uniform $L^{\infty}$ bounds
for $\oo$ to deduce that for fixed $\delta$ and $n$
\begin{equation}
\label{e6} \lim_{\eps \to 0}\left(\lim_{\eta \to 0} \int_{\RR}
\beta'(\oo) \rr \psi_{\delta,n}\,dx \right)= 0 \qquad \textrm{in }\:
L_{\loc}^1(\R^+).
\end{equation}
Besides,
\begin{equation*}
\lim_{\eps \to 0}\left(\lim_{\eta \to 0}
\|\oo-\om\|_{L_{\loc}^1\left(\R^+,L^1(\RR)\right)}\right)=0,
\end{equation*}
so that using the uniform bounds on $\dt \psi_{\delta,n}
+u_\eta\cdot \nabla \psi_{\delta,n}$ with respect to $\eta,\eps$, we
are led to
\begin{equation}
\label{e7}
\begin{split}
&\lim_{\eps \to 0}\left(\lim_{\eta \to 0} \int_{\RR} \beta(\oo)(\dt
\psi_{\delta,n} +u_\eta\cdot
\nabla \psi_{\delta,n})\,dx \, d \tau\right)\\
&=\int_{\RR} \beta(\om)(\dt \psi_{\delta,n} +u\cdot \nabla
\psi_{\delta,n})\,dx \, d\tau \qquad \textrm{in }\:
L_{\loc}^1(\R^+).
\end{split}
\end{equation}
Finally, since
\begin{equation*}
\lim_{\eta,\eps \to 0}\frac{d}{dt} \int_{\RR} \beta(\oo)
\psi_{\delta,n}\,dx=\frac{d}{dt} \int_{\RR} \beta(\om)
\psi_{\delta,n}\,dx
\end{equation*}
in the sense of distributions on $\R^+$, we infer from \eqref{e6}
and \eqref{e7}
\begin{equation}
\label{e3}
 \frac{d}{dt} \int_{\RR} \beta(\om) \psi_{\delta,n}\,dx=\int_{\RR} \beta(\om)(\dt \psi_{\delta,n} +u\cdot
\nabla \psi_{\delta,n})\,dx  \qquad \textrm{in }\: \mathcal{D}'(\R^+).
\end{equation}
On the other hand, we compute
\begin{equation*}
\begin{split}
\dt \psi_{\delta,n} +u\cdot \nabla
\psi_{\delta,n}=\chi_{\delta}(x-z^n)&( \dt \psi + u\cdot
\nabla \psi) \\ &+ \psi (v-\dot{z^n}+H) \cdot \nabla
\chi_{\delta}(x-z^n).
\end{split}
\end{equation*}
We first let $n$ go to $+\infty$. Since $\chi_\delta$ is radially
symmetric, we have
\begin{equation*}
H\cdot \nabla \chi_\delta (x-z^n)\to H\cdot \nabla \chi_\delta
(x-z)\equiv 0 \qquad \textrm{in }\: L_{\loc}^1(\R^+\times \RR).
\end{equation*}
Thanks to the pointwise convergence of $\psi_{\delta,n}$ as $n$ goes to
$+\infty$ to $\psi_\delta$ and to the uniform $L^{\infty}$ bounds for
the velocity and the vorticity, we deduce
\begin{equation*}
\begin{split}
\big|\int_{\RR} \beta(\om)(\dt \psi_{\delta} +u\cdot \nabla
\psi_{\delta})\,dx -\int_{\RR} \beta(\om)
&\chi_{\delta}\big(x-z\big) (\dt \psi +u\cdot \nabla
\psi)\,dx\big|\\
& \leq C\int_{\RR} |\nabla \chi_{\delta}|\,dx.
\end{split}
\end{equation*}
Letting $\delta$ go to zero and using \eqref{cut-off2} and \eqref{e3}
yields
\begin{equation*}
\frac{d}{dt} \int_{\RR} \beta(\om)\psi(t,x)\,dx=\int_{\RR}
\beta(\om) (\dt \p +u\cdot \na \psi)\,dx
\end{equation*}
in the sense of distributions on $\R^+$. Since the right-hand side
in the previous equality belongs to $L_{\loc}^1(\R^+)$, the equality
holds in $L_{\loc}^1(\R^+)$ and the Lemma is proved.
\end{proof}

\begin{remark}
\label{remark : conserv} (1) Lemma \ref{renorm1} actually still
holds when $\psi$ is smooth, bounded and has bounded first
derivatives in time and space. In this case, we have to consider
smooth functions $\beta$ which in addition satisfy $\beta(0)=0$, so
that $\beta(\om)$ is integrable.
 This may be proved by approximating $\psi$ by smooth and compactly supported functions $\psi_n$ for which Lemma \ref{renorm1} applies, and by letting then $n$ go to $+\infty$.\\
(2) We let $1\leq p<+\infty$. Approximating $\beta(t)=|t|^p$ by
smooth functions and choosing $\psi\equiv 1$ in Lemma \ref{renorm1},
we deduce that for an Eulerian solution $\om$ to \eqref{t1}, the
maps $t\mapsto \|\om(t)\|_{L^p(\RR)}$ are continuous and constant.
In particular, we have
\begin{equation*}
 \|\om(t)\|_{L^1(\RR)}+\|\om(t)\|_{L^\infty(\RR)}\equiv \|\om_0\|_{L^1(\RR)}+\|\om_0\|_{L^\infty(\RR)},
\end{equation*}
and we denote by $\|\om_0\|$ this last quantity.

\end{remark}

\subsection{Conservation of the vorticity near the point vortex}

\label{subsection32}

Specifying our choice for $\beta$ in Lemma \ref{renorm1}, we are led to the
following

\begin{proposition}
\label{constant_vorticity} Let $(\om,z)$ be an Eulerian solution of
\eqref{t1} such that
\begin{equation*}
\om_0\equiv \a \qquad \textrm{on }\: B(z_0,R_0)
\end{equation*}
for some positive $R_0$. Then there exists a continuous and positive
function $t\mapsto R(t)$ depending only on $t$, $R_0$ and
$||\om_0||$ such that $R(0)=R_0$ and
\begin{eqnarray*}
\forall t\in \R^+, \qquad \om(t)\equiv \a \qquad \textrm{on }\:
B\left(z(t),R(t)\right).
\end{eqnarray*}
\end{proposition}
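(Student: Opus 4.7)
\emph{Proof proposal.} The strategy is to run a Gr\"onwall-type argument on a localized renormalized $L^2$ deviation of $\om$ from $\a$, using Lemma \ref{renorm1} together with the radial cut-off trick that kills the singular part $H$ of the velocity. Pick a smooth non-increasing radial cutoff $\zeta:[0,\infty)\to[0,1]$ with $\zeta\equiv 1$ on $[0,1-\eps]$ and $\zeta\equiv 0$ on $[1,\infty)$, for a small parameter $\eps>0$; and pick a smooth $\b:\R\to\R$ coinciding with $s\mapsto(s-\a)^2$ on $[-M,M]$ with $M=\|\om_0\|_{L^\infty}$ and satisfying $\b(0)=0$. By Remark \ref{remark : conserv}(2) we have $|\om(t,x)|\le M$ almost everywhere, so $\b(\om)\ge 0$ with equality if and only if $\om=\a$. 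For a $C^1$ function $R:\R^+\to(0,\infty)$ with $R(0)=R_0$ to be determined, set $\p(t,x)=\zeta(|x-z(t)|/R(t))$. Since $z\in C^1$ with $\dot z=v(\cdot,z(\cdot))$ bounded (Lemma \ref{loglip}) and $\zeta$ is constant near the origin, $\p$ is smooth with bounded first derivatives in $t$ and $x$, so Remark \ref{remark : conserv}(1) allows us to apply Lemma \ref{renorm1}.

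A direct computation with $r=|x-z(t)|$, using $\dot z(t)=v(t,z(t))$ together with the cancellation $H\cdot\na\p\equiv 0$ coming from the radial symmetry of $\zeta$ (as in \eqref{cut.H}), gives
\begin{equation*}
\dt\p+u\cdot\na\p=\zeta'\!\left(\frac{r}{R}\right)\frac{1}{R}\left[\bigl(v(t,x)-v(t,z(t))\bigr)\cdot\frac{x-z(t)}{r}-\frac{r\dot R}{R}\right].
\end{equation*}
Bounding the first bracketed term by $C\f(r)$ with $C=C(\|\om_0\|)$ via the almost-Lipschitz estimate \eqref{loglipschitz}, observing that $\zeta'\le 0$ with support in $r/R\in[1-\eps,1]$, and using that $s\mapsto\f(s)/s$ is non-increasing on $(0,\infty)$, we see that the choice
\begin{equation*}
\dot R(t)=-\frac{C}{1-\eps}\,\f\bigl((1-\eps)R(t)\bigr),\qquad R(0)=R_0,
\end{equation*}
forces the bracket to be $\ge 0$, hence $\dt\p+u\cdot\na\p\le 0$ pointwise. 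This scalar ODE has a unique global $C^1$ solution that stays strictly positive: for small $R$ one has $\f(R)\sim R(1+|\ln R|)$ and the solution decays only doubly exponentially, so $R(t)>0$ for every $t\ge 0$.

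Multiplying through by $\b(\om)\ge 0$ and invoking Lemma \ref{renorm1} yields $\frac{d}{dt}\int_{\RR}\p\,\b(\om)\,dx\le 0$ in $L^1_{\loc}(\R^+)$, so this continuous function of $t$ is non-increasing; it vanishes at $t=0$ because $\p(0,\cdot)$ is supported in $B(z_0,R_0)$ where $\om_0\equiv\a$ and therefore $\b(\om_0)\equiv 0$ on this support. Consequently $\int\p\,\b(\om)(t,\cdot)\,dx=0$ for every $t\ge 0$, and since $\p\equiv 1$ on $B(z(t),(1-\eps)R(t))$ while $\b(\om)\ge 0$, we deduce $\om(t)\equiv\a$ a.e.\ on $B(z(t),(1-\eps)R(t))$. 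Letting $\eps\to 0$ with continuous dependence of the ODE flow on the parameter yields the statement with $R(t)$ the unique solution of $\dot R=-C\f(R)$, $R(0)=R_0$. The main technical point is the cancellation $H\cdot\na\p\equiv 0$ enabled by radial symmetry, without which the singular velocity $H$ would preclude any such direct estimate; once that is in place the whole argument reduces to a Gr\"onwall-type bookkeeping combined with the renormalization result.
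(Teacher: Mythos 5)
Your proposal is correct and follows essentially the same route as the paper: a radial test function $\F_0(|x-z(t)|/R(t))$ centered at the vortex, the cancellation $H\cdot\na\p\equiv 0$ from radial symmetry, the almost-Lipschitz bound on $v(t,x)-v(t,z(t))$, and an ODE $\dot R\sim-C\f(R)$ whose solution decays doubly exponentially but stays positive. Two small points: (i) Lemma \ref{renorm1} only gives the identity in $L^1_{\loc}(\R^+)$, so to evaluate $\int\p\,\b(\om)\,dx$ at \emph{every} time $T$ (and at $T=0$) you need to justify its continuity; the paper does this by showing $\pd_t\om$ is bounded in $L^1_{\loc}(\R^+,W^{-1,q}_{\loc})$, hence $\om\in C_w(\R^+,L^2_{\loc})$, which combined with the conservation of $\|\om(t)\|_{L^2}$ gives $\om\in C(\R^+,L^2)$ — you assert the continuity without argument. (ii) Your requirements on $\b$ are contradictory when $\a\neq 0$: a function coinciding with $(s-\a)^2$ on $[-M,M]$ has $\b(0)=\a^2\neq 0$; but the condition $\b(0)=0$ is unnecessary here since your $\p$ is compactly supported in space, so Lemma \ref{renorm1} applies directly with $\b(s)=(s-\a)^2$, as in the paper.
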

\begin{proof} We set $\beta(t)=(t-\a)^2$ and use Lemma \ref{renorm1} with this choice. Let $\F \in \mathcal{D}(\R^+\times \RR)$. We claim that for all $T$
\begin{equation*}
\begin{split}
\int_{\RR} \F(T,x) (\om-\a)^2(T,x)\,dx &- \int_{\RR} \F(0,x) (\om-\a)^2(0,x)\,dx \\
&=\int_0^T\int_{\RR} (\om-\a)^2 (\dt \F +u\cdot \nabla \F)\,dx \,
dt.
\end{split}
\end{equation*}
This is actually an improvement of Lemma \ref{renorm1}, in which the
equality holds in $L^1_{\loc}(\R^+)$.  Indeed, we have $\pd_t \om
=-\diver (u\om)$ (in the sense of distributions) with $\om\in
L^\infty$ and $u\in L^\infty(\R^+,L^q_{\loc}(\RR))$ for all $q<2$,
which implies that $\pd_t \om$ is bounded in
$L^1_{\loc}(\R^+,W^{-1,q}_{\loc}(\RR))$. Hence, $\om$ belongs to
$C(\R^+,W^{-1,q}_{\loc}(\RR))\subset C_{w}(\R^+, L^2_{\loc}(\R^2))$, where $C_{w}(L_{\loc}^{2})$ stands for the space of maps $f$ such that for any sequence $t_n\to t$, the sequence $f(t_n)$ converges to $f(t)$ weakly in $L^2_{\loc}$. Since
on the other hand $t \mapsto \|\om(t)\|_{L^2}$ is continuous by
Remark \ref{remark : conserv}, we have $\om \in C(\R^+,L^2(\RR))$. Therefore the previous integral equality holds for all $T$.

Now, we choose a test function $\F$ centered at $z(t)$. More
precisely, we let $\F_0$ be a non-increasing function on $\R$, which
is equal to $1$ for $s\leq 1/2$ and vanishes for $s\geq 1$ and we
set $\F(t,x)=\F_0(|x-z(t)|/R(t))$, with $R(t)$ a smooth, positive
and decreasing function to be determined later on, such that
$R(0)=R_0$. We should regularize $z$ as in the proof of Lemma
\ref{renorm1} to ensure enough regularity for $\F$, but we omit the
details here for the sake of clarity. For this choice of $\F$, we
have $(\om_0(x)-\a)^2 \F(0,x)\equiv 0$.

We compute then
\begin{equation*}
\na \F= \frac{x-z}{|x-z|}\frac{\F_0'}{R(t)}
\end{equation*}
and
\begin{equation*}
\pd_t \F = -\frac{R'(t)}{R^2(t)}|x-z|\F_0'+\frac{\dot{z}\cdot (z-x)}{|x-z|}\frac{\F_0'}{R(t)}.
\end{equation*}
Since $u\cdot \na \F=(v+H)\cdot \na \F=v\cdot \na \F$, we obtain
\begin{equation}\label{ift_calcul}
\begin{split}
\int_{\RR} &\F(T,x) (\om-\a)^2(T,x)\,dx \\
& =\int_0^T \int_{\RR} (\om-\a)^2
\frac{\F_0'(\frac{|x-z|}{R})}{R}\Bigl( (v(x)-\dot{z})\cdot
\frac{(x-z)}{|x-z|}-\frac{R'}{R}|x-z|\Bigl)\, dx\, dt.
\end{split}
\end{equation}
Without loss of generality, we may assume that $R_0\leq 1$, so that $R\leq 1$.
As $\F_0'(\frac{|x-z|}{R})\leq 0$ for $R/2\leq |x-z|\leq R$ and vanishes elsewhere and $R'<0$, we can estimate the right-hand side term of \eqref{ift_calcul} by:
\begin{equation*}
\begin{split}
\int_0^T \int_{\RR} (\om-\a)^2 & \frac{\F_0'(\frac{|x-z|}{R})}{R}\Bigl( (v(x)-\dot{z})\cdot \frac{(x-z)}{|x-z|}-\frac{R'}{R}|x-z|\Bigl)\, dx\, dt   \\
&\leq \int_0^T \int_{\RR} (\om-\a)^2
\frac{|\F_0'|(\frac{|x-z|}{R})}{R}\left(|v(x)-v(z)|+\frac{R'}{2}\right)\,
dx\, dt.
\end{split}
\end{equation*}
Using that $v\in L^\infty(\mathcal{AL})$ and recalling that $\f$ is non-decreasing (see Lemma \ref{loglip}), we deduce from \eqref{ift_calcul}
\begin{equation*}
\begin{split}
\int_{\RR} \F(T,x) &(\om-\a)^2(T,x)\,dx \\
&\leq \int_0^T \int_{\RR} (\om-\a)^2 \frac{|\F_0'|(\frac{|x-z|}{R})}{R}\left(C\f(|x-z|)+\frac{R'}{2}\right)\, dx\, dt\\
&\leq \int_0^T \int_{\RR} (\om-\a)^2
\frac{|\F_0'|(\frac{|x-z|}{R})}{R}\left(CR(1-\ln
R)+\frac{R'}{2}\right)\, dx\, dt,
\end{split}
\end{equation*}
where $C$ only depends on $\|\om_0\|$. Taking $R(t) =\exp (1-(1-\ln
R_0)e^{2Ct})$, we arrive at
\begin{equation*}
\int_{\RR} \F(T,x) (\om-\a)^2(T,x)\,dx\leq 0,
\end{equation*}
which ends the proof.
\end{proof}

Proposition \ref{constant_vorticity} provides the following
\begin{corollary}
\label{cstmil} Let $(\om_1,z_1)$ and $(\om_2,z_2)$ be two Eulerian
solutions to \eqref{t1} starting from $(\om_0,z_0)$. Assume in
addition that
\begin{equation*}
\om_0\equiv \a \qquad \textrm{on }\: B(z_0,R_0).
\end{equation*}
Let $T^\ast>0$ be fixed. Then there exists a time $T_C\leq T^\ast$
depending only on $T^\ast$, $\| \om_0\|$ and $R_0$ such that
\begin{equation*}
\om_1(t)\equiv \om_2(t)=\a \qquad \textrm{on }\: B\big(
z(t),\frac{R(t)}{2}\big),\qquad \forall t\in[0,T_C],
\end{equation*}
where $z(t)$ is the middle point of $[z_1(t),z_2(t)]$. Moreover, we
have
\begin{equation*}
z_1(t),z_2(t)\in B\big(z(t),\frac{R(t)}{8}\big).
\end{equation*}
\end{corollary}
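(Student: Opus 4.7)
The plan is to combine Proposition \ref{constant_vorticity}, applied separately to each $(\om_i,z_i)$, with a simple continuity argument at $t=0$ that controls the distance $|z_1(t)-z_2(t)|$. Because both solutions share the initial pair $(\om_0,z_0)$ with $\om_0\equiv\a$ on $B(z_0,R_0)$, Proposition \ref{constant_vorticity} produces one and the same radius function $R(t)$, with $R(0)=R_0$, such that
\begin{equation*}
\om_i(t)\equiv \a \quad \textrm{on } B(z_i(t),R(t)),\qquad i=1,2.
\end{equation*}
The task thus reduces to arranging, on some $[0,T_C]$, that the ball of radius $R(t)/2$ around the midpoint $z(t)$ is contained in both $B(z_i(t),R(t))$.

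The quantitative input is that $z_1$ and $z_2$ are uniformly Lipschitz in time with the same constant. By Lemma \ref{loglip} together with Remark \ref{remark : conserv}(2), the Biot--Savart bound
\begin{equation*}
\|v_i(t)\|_{L^\infty(\RR)} \leq C\big(\|\om_i(t)\|_{L^1}+\|\om_i(t)\|_{L^\infty}\big)=C\|\om_0\|=:M
\end{equation*}
is uniform in $t$, so that $|\dot z_i(t)|\leq M$. Since $z_1(0)=z_2(0)=z_0$, this yields
\begin{equation*}
|z_1(t)-z_2(t)| \leq 2Mt,\qquad |z(t)-z_i(t)| = \tfrac12|z_1(t)-z_2(t)| \leq Mt.
\end{equation*}

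I would then define $T_C\leq T^\ast$ as the largest time for which $Mt\leq R(t)/8$ holds throughout $[0,T_C]$; such a $T_C>0$ exists because at $t=0$ the inequality $0\leq R_0/8$ is strict and both sides are continuous, and the resulting $T_C$ plainly depends only on $T^\ast$, $R_0$ and $\|\om_0\|$. The inclusion $z_i(t)\in B(z(t),R(t)/8)$ then follows immediately, and for any $x\in B(z(t),R(t)/2)$ the triangle inequality gives
\begin{equation*}
|x-z_i(t)|\leq |x-z(t)|+|z(t)-z_i(t)|\leq R(t)/2+R(t)/8<R(t),
\end{equation*}
so $x\in B(z_i(t),R(t))$ and hence $\om_i(t,x)=\a$. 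I do not foresee any real obstacle: the argument is essentially a continuity statement at $t=0$. The only point deserving care is to invoke Remark \ref{remark : conserv}(2) to keep the $L^\infty$ bound on $v_i$ constant in time, which is what permits choosing $T_C$ uniformly in the two solutions.
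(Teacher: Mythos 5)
Your proposal is correct and follows essentially the same route as the paper: both apply Proposition \ref{constant_vorticity} to each solution, use the uniform bound $\|v_i\|_{L^\infty}\leq C\|\om_0\|$ to get $|z_1(t)-z_2(t)|\leq 2C\|\om_0\| t$, and choose $T_C$ so that this stays below $R(t)/4$ (the paper makes this explicit via $R_m=\min_{[0,T^\ast]}R(t)$ and $T_C=\min(R_m(8C\|\om_0\|)^{-1},T^\ast)$), after which the ball inclusions follow by the triangle inequality.
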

\begin{proof}
Let us define $R_m:=\min_{t\in [0,T^\ast]} R(t)>0$, where $R(t)$ is
given in Proposition \ref{constant_vorticity}. Since
$|z_1(0)-z_2(0)|=0$ and $\|v_1\|_{L^{\infty}},\|v_2\|_{L^{\infty}}
\leq C\|\om_0\|$, we have
\begin{equation*}
|z_1(t)-z_2(t)|\leq 2C\|\om_0\|t \leq \frac{R_m}{4},\qquad \forall
t\in [0,T_C],
\end{equation*}
where $T_C=\min(R_m(8 C\| \om_0\|)^{-1},T^\ast)$. Hence, we get
\begin{equation*}
|z_1(t)-z_2(t)|\leq \frac{R(t)}{4},\qquad \forall  t\in [0,T_C],
\end{equation*}
 and this yields
\begin{equation*}
B\big(z(t),\frac{R(t)}{2}\big)\subset B\left(z_1(t),R(t)\right)\cap
B\left(z_2(t),R(t)\right).
\end{equation*}
The conclusion follows from Proposition \ref{constant_vorticity}.
\end{proof}

\begin{remark} We assume that $\om_0$ has compact support.
Considering $\beta(t)=t^2$ in Lemma \ref{renorm1} and adapting the
proof of Proposition \ref{constant_vorticity}, we obtain that
$\om(t)$ remains compactly supported and its support grows at most
linearly. Indeed, if we choose $\F(t,x)=1-\F_0(|x-z(t)|/R(t))$, with
$R(t)$ a smooth, positive and increasing function such that
$R(0)=R_1$, where $\supp \om_0 \subset B(z_0,R_1)$, then
\eqref{ift_calcul} becomes
\begin{equation*}
\begin{split}
\int_{\RR} &\F(T,x) \om^2(T,x)\,dx \\
& =\int_0^T \int_{\RR} \om^2 \frac{-\F_0'(\frac{|x-z|}{R})}{R}\Bigl( (v(x)-\dot{z})\cdot \frac{(x-z)}{|x-z|}-\frac{R'}{R}|x-z|\Bigl)\, dx\, dt\\
&\leq \int_0^T \int_{\RR} \om^2
\frac{|\F_0'|(\frac{|x-z|}{R})}{R}\Bigl(2C -\frac{R'}{2}\Bigl)\,
dx\, dt,
\end{split}
\end{equation*}
where $C$ depends only on $\|\om_0\|$. The right-hand side is
identically zero for $R(t)=R_1 + 4Ct$, and we conclude that
$\supp(\om(t))\in B(0,R(t))$. \label{remark : support}
\end{remark}


\subsection{Weak formulation for the velocity}

We now turn to the equation satisfied by the velocity $v$ for an
Eulerian solution $(\om,z)$ of \eqref{t1}. This equation is
established in \cite{ift_lop} in the situation where the point
vortex is fixed at the origin. It can be easily extended to our
case, and we obtain the following

\begin{proposition}
\label{form_velocity} Let  $(\om,z)$ be a global solution to
\eqref{t1} with initial condition $(\om_0,z_0)$. Then we have in the
sense of distributions on $\R^+ \times \RR$ 
\begin{equation*}
\begin{cases}
\pd_t v +v\cdot \na v + \diver(v\otimes H+H\otimes v) - v(z(t))^\perp  \delta_{z(t)} = -\na p \\
\diver v =0 \\
\dot{z}(t)= v(t,z(t))\\
v(x,0)=K\ast \om_0 \text{\ and } z(0)=z_0,
\end{cases}
\end{equation*}
where $\delta_{z(t)}$ is the Dirac mass centered at $z(t)$ and
$H(t,x)\equiv K(x-z(t))$.
\end{proposition}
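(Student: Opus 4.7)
My plan is to reduce the proposition to an application of the Poincar\'e lemma for distributions, by means of two algebraic identities that relate $v\cdot \na v$ and $\diver(v\otimes H+H\otimes v)$ to gradient terms, a vorticity term, and a Dirac mass. The starting point is the pointwise identity $u\cdot \na u = \tfrac{1}{2}\na|u|^2 + (\curl u)\, u^\perp$ valid for any $C^1$ divergence-free vector field in $\R^2$, together with its bilinear polarization $a\cdot \na b + b\cdot \na a = \na(a\cdot b) + \alpha\, b^\perp + \beta\, a^\perp$ for two divergence-free fields $a,b$ with $\curl a = \alpha$, $\curl b = \beta$. Applied respectively to $v$ (with $\diver v=0$ coming from $\diver K=0$, and $\curl v = \om$) and to the pair $(v,H)$ (with $\curl H = \delta_{z(t)}$), one obtains the distributional identities
\begin{equation*}
v\cdot \na v = \tfrac{1}{2}\na|v|^2 + \om\, v^\perp, \qquad \diver(v\otimes H+H\otimes v) = \na(v\cdot H) + \om\, H^\perp + v(z(t))^\perp\delta_{z(t)}.
\end{equation*}
The Dirac contribution arises from the pairing of $\delta_{z(t)}\, v^\perp$ against a test function: since $v$ is continuous at $z(t)$ by Lemma \ref{loglip}, this distribution coincides with $v(z(t))^\perp \delta_{z(t)}$. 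I would prove these identities rigorously by mollifying $v$ and $H$, applying the smooth version, and passing to the limit using $v\in L^\infty$, $\na v \in L^p_{\loc}$ for all $p<\infty$ (Lemma \ref{calderon}), and $H \in L^q_{\loc}$ for all $q<2$.

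Substituting these two identities into the candidate PDE and absorbing the scalar terms into a modified pressure $P = p + \tfrac{1}{2}|v|^2 + v\cdot H$, the claim reduces to
\begin{equation*}
\partial_t v + \om\, (v+H)^\perp = -\na P \qquad \text{in}\; \mathcal{D}'(\R^+\times \RR).
\end{equation*}
To produce such a $P$, I would compute the distributional spatial curl of the left-hand side: using $\curl \partial_t v = \partial_t \om$ and the elementary identity $\curl(\rho\, w^\perp) = \diver(\rho\, w)$ valid for any scalar $\rho$ and vector $w$, the curl equals $\partial_t \om + \diver(\om(v+H))$, which vanishes by the vorticity equation in \eqref{t1}. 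Hence the left-hand side is a spatially curl-free distribution on the simply connected set $\R^+\times \RR$, and the Poincar\'e lemma for distributions (with $t$ treated as a parameter) provides the required $P$.

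The remaining conclusions of the proposition --- the divergence-free condition for $v$, the ODE $\dot z = v(t,z(t))$, and the initial data --- are immediate from the definition of an Eulerian solution together with $\diver K = 0$ and the continuity of $v$ furnished by Lemma \ref{loglip}. The main technical obstacle is the rigorous justification of the two algebraic identities, since $\na H$ has a non-integrable singularity at $z(t)$ and the product $v\cdot \na H$ is not meaningful pointwise; the remedy is to interpret $\diver(H\otimes v) = v\cdot \na H$ directly in the distributional sense (using $\diver v = 0$ so that $H\otimes v \in L^1_{\loc}$), and to control the limit of the mollified equation near $z(t)$ by the almost-Lipschitz regularity of $v$ from Lemma \ref{loglip}.
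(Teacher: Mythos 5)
Your argument is correct. Note, however, that the paper gives no proof of Proposition \ref{form_velocity} at all: it cites \cite{ift_lop}, where the identity is derived for a point vortex fixed at the origin, and merely asserts that the extension to a moving vortex is straightforward. What you have written is in substance the argument of that reference transported to the moving-vortex setting: the pointwise identity $u\cdot\na u=\tfrac12\na|u|^2+(\curl u)\,u^\perp$ and its polarization, the computation $\curl H=\delta_{z(t)}$, the identification of $(\curl H)\,v^\perp$ with $v(t,z(t))^\perp\delta_{z(t)}$ through the continuity of $v$ (Lemma \ref{loglip}), and the closure of the argument by taking the spatial curl, invoking the vorticity equation of \eqref{t1}, and applying de Rham's theorem. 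Two small points should be made explicit. First, the parenthetical ``using $\diver v=0$ so that $H\otimes v\in L^1_{\loc}$'' conflates two facts: local integrability of $H\otimes v$ follows from $H\in L^q_{\loc}$ for $q<2$ together with $v\in L^\infty$, whereas $\diver v=0$ is what identifies $\diver(H\otimes v)$ with the distribution $v\cdot\na H$; the limit passage itself should mollify $H$, use $\diver H_\eps=0$ and $H_\eps\to H$ in $L^q_{\loc}$ paired against $\na v\in L^{q'}_{\loc}$ (Lemma \ref{calderon}), and use $\rho_\eps(\cdot-z(t))\,v^\perp\to v(t,z(t))^\perp\delta_{z(t)}$, for which continuity of $v(t,\cdot)$ at $z(t)$ for each fixed $t$ suffices, combined with dominated convergence in $t$. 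Second, rather than a parametrized Poincar\'e lemma, it is cleaner to observe that every compactly supported divergence-free test field on $\RR$ equals $\na^\perp\psi$ for a compactly supported scalar $\psi$, so that the vanishing of the spatial curl of $\pd_t v+\om\,(v+H)^\perp$ is precisely the statement that it annihilates divergence-free test fields; this is the form of de Rham's theorem the paper itself invokes in the proof of Proposition \ref{prop : cont-velocity}. With these details filled in, your proof is complete.
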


In the sequel,  we will denote by $W^{1,4}_\s(\RR)$ the set of functions belonging to $W^{1,4}(\RR)$ and which are divergence-free in the sense of distributions, and by  $W^{-1,4/3}_\s(\RR)$
its dual space.

 Given two solutions $(\om_1,z_1)$ and $(\om_2,z_2)$ of \eqref{t1}, we define $\vti =K\ast (\om_1-\om_2)=v_1-v_2$. As a consequence of Proposition \ref{form_velocity}, we obtain the following properties for $\vti$.

\begin{proposition}
\label{prop : cont-velocity} Let $\om_0\in \llll$ be compactly
supported, $z_0\in \RR$ and $(\om_1,z_1)$, $(\om_2,z_2)$ be two
Eulerian solutions of \eqref{t1} with initial condition
$(\om_0,z_0)$. Let $\vti=v_1-v_2$. Then we have
\begin{equation*}
\vti \in L_{\loc}^2\left(\R^+,W^{1,4}_\s(\RR)\right),\quad \pd_t \vti \in L_{\loc}^2\left(\R^+,W^{-1,\frac{4}{3}}_\s(\RR)\right).
\end{equation*}
In addition, we have $\vti\in  C\left(\R^+, L^2(\RR)\right)$ and for all $T\in\R^+$,
\begin{equation*}
\|\tilde v(T)\|_{L^2(\RR)}^2=2\int_0^T \langle \pd_t \tilde v,\tilde
v \rangle_{W^{-1,4/3}_\s,W^{1,4}_\s}\,ds,\qquad \forall T\in \R^+.
\end{equation*}
\end{proposition}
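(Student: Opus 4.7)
The plan is to establish the two regularity statements first, then to deduce the continuity in $L^2$ and the energy identity from an abstract Gelfand-triple argument.

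For $\tilde v\in L^2_{\loc}(\R^+, W^{1,4}_\s(\RR))$, I would combine three ingredients. The divergence-free property is inherited from the $v_i$. For the gradient, Calder\'on--Zygmund (Lemma \ref{calderon}) applied to $\tilde v=K\ast\tilde\om$ gives $\|\nabla\tilde v(t)\|_{L^4}\le C\|\tilde\om(t)\|_{L^4}$, and the right-hand side is uniformly bounded in $t$ by Remark \ref{remark : conserv}. For the integrability of $\tilde v$ itself, I would exploit that the total mass is conserved, so $\int\tilde\om(t)=0$, together with the compact support of $\tilde\om(t)$ from Remark \ref{remark : support}. The Taylor expansion
\[
\tilde v(t,x)=\int_{\RR}\bigl(K(x-y)-K(x)\bigr)\tilde\om(t,y)\,dy=O(|x|^{-2})\quad\text{as }|x|\to\infty
\]
then yields $\tilde v(t)\in L^p(\RR)$ for every $p>1$, in particular $\tilde v\in L^\infty_{\loc}(\R^+,L^2)$. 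Combined with Lemma \ref{loglip} for the $L^\infty$ control, this gives $\tilde v\in L^\infty_{\loc}(\R^+, W^{1,4}_\s\cap L^2)$, which is a fortiori in $L^2_{\loc}(\R^+, W^{1,4}_\s)$.

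For $\partial_t\tilde v\in L^2_{\loc}(\R^+, W^{-1,4/3}_\s)$ I would start from Proposition \ref{form_velocity}. Subtracting the two velocity equations and writing the bilinear differences as $v_1\otimes v_1-v_2\otimes v_2=\tilde v\otimes v_1+v_2\otimes\tilde v$ and $v_1\otimes H_1-v_2\otimes H_2=\tilde v\otimes H_1+v_2\otimes\tilde H$ (with $\tilde H=H_1-H_2$), and similarly for $H_i\otimes v_i$, produces an identity for $\partial_t\tilde v+\na\tilde p$. Pairing with a divergence-free $\phi\in W^{1,4}_\s$ eliminates $\na\tilde p$. The quadratic terms are controlled by H\"older with exponents $(2,4,4)$, using $\tilde v\in L^2$ from the previous step and $v_i\in L^4$ (since $\om_i$ is compactly supported, so $v_i=O(|x|^{-1})$ at infinity). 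The terms involving $H_i$ are split into a local piece near $z_i$ (where $H_i\in L^{4/3}_{\loc}$ balances $\tilde v\in L^\infty$) and a far-field piece (where $H_i=O(|x|^{-1})$ meets the decay $\tilde v=O(|x|^{-2})$); the $\tilde H$-terms use the extra cancellation $\tilde H=O(|z_1-z_2|\,|x|^{-2})$ at infinity from the first-order Taylor expansion of $K$. Finally, the singular Dirac sources $v_i(z_i)^\perp\delta_{z_i}$ are absorbed via the Sobolev embedding $W^{1,4}(\RR)\hookrightarrow C^{0,1/2}(\RR)$, which gives $|\phi(z_i)|\le C\|\phi\|_{W^{1,4}}$. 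All the resulting constants are locally bounded in $t$ and therefore in $L^2_{\loc}$.

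To pass from these two estimates to $\tilde v\in C(\R^+,L^2)$ and the energy identity, I would work in the Gelfand triple $V\hookrightarrow L^2_\s\hookrightarrow V^*$ with $V=L^2_\s\cap W^{1,4}_\s$ endowed with the sum norm; its dual contains $W^{-1,4/3}_\s$. The previous two steps give $\tilde v\in L^2_{\loc}(\R^+,V)$ and $\partial_t\tilde v\in L^2_{\loc}(\R^+,V^*)$, so the classical Lions--Magenes lemma (Temam, \emph{Navier--Stokes Equations}, Ch.~III, Lemma~1.2) yields $\tilde v\in C(\R^+,L^2)$ together with
\[
\|\tilde v(T)\|_{L^2}^2-\|\tilde v(0)\|_{L^2}^2=2\int_0^T\langle\partial_t\tilde v,\tilde v\rangle_{V^*,V}\,ds.
\]
Since the initial data coincide, $\tilde v(0)=K\ast(\om_0-\om_0)=0$ and the identity reduces to the one stated. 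The main difficulty throughout is that the Biot--Savart fields $H_i$ do not lie in $L^2(\RR)$ and carry a pointwise singularity at $z_i$, so the standard $L^2$ energy estimate fails for each solution taken separately; the resolution is the double cancellation sketched above --- mass-conservation for $\tilde v$ and Taylor cancellation for $\tilde H$, each gaining one order of decay at infinity --- which is precisely what allows the $W^{1,4}$/$W^{-1,4/3}$ duality, together with the Sobolev pointwise control of the test functions, to close the estimates.
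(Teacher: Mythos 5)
Your proof is correct and follows essentially the same route as the paper's: mass cancellation plus compact support for the $L^2$ bound on $\tilde v$, Calder\'on--Zygmund and far-field decay for the $W^{1,4}$ bound, the velocity equation of Proposition \ref{form_velocity} tested against divergence-free fields for the $W^{-1,4/3}_\s$ bound on $\pd_t\tilde v$, and the Temam/Lions--Magenes lemma for the continuity and the energy identity. The only differences are minor: the paper bounds each $\pd_t v_i$ separately and subtracts at the end (so it does not need the Taylor cancellation for $\tilde H$ at this stage --- those estimates are deferred to the Gronwall argument), and you are slightly more careful than the paper in taking $V=L^2_\s\cap W^{1,4}_\s$ as the space of the Gelfand triple, since $W^{1,4}(\RR)$ alone does not embed into $L^2(\RR)$; the paper instead invokes its separately established bound $\tilde v\in L^\infty_{\loc}(\R^+,L^2)$ when applying Temam's lemma.
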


\begin{proof}

We define $\tilde{\om}=\om_1-\om_2$, so that $\tilde{v}=K\ast
\tilde{\om}$ and we have for all $t$
\begin{equation*}
\int_{\RR} \om_1(t,x)\,dx\equiv \int \om_0(x)\,dx\equiv \int_{\RR}
\om_2(t,x)\,dx.
\end{equation*}
To see this, we may for instance choose $\beta(t)\equiv t$ and $\p
\equiv 1$ in Lemma \ref{renorm1}. Therefore, $\int
\tilde{\om}(t)\equiv 0$. On the other hand, $\om_1$ and $\om_2$ are
compactly supported in view of Remark \ref{remark : support}, so we
first infer that $\vti (t) \in L^2(\RR)$ for all $t$ (see
\cite{maj-bert} for more details). Using that
$\|\om_i\|_{L^1(\RR)\cap L^\infty(\RR)} \in L^\infty(\R^+)$, we even
obtain
\begin{equation}
\label{bound-velocity}
 \vti \in L_{\loc}^{\infty}(\R^+,L^2(\RR)).
\end{equation}

We now turn to the first assertion in Proposition \ref{prop : cont-velocity}. We apply Proposition
\ref{form_velocity} to $(v_i,z_i)$ for $i=1$ and $i=2$.
 First, we infer from Lemmas \ref{calderon} and \ref{loglip} that  $v_i=K*\om_i$ belongs to $L^\infty(\R^+\times \RR)$ and its
gradient $\na v_i$ to $L^\infty(\R^+,L^4(\RR))$. On the other hand,
since the vorticity $\om_i$ is compactly supported, we have for
large $|x|$
\begin{equation*}
|v_i(t,x)|\leq \frac{C}{|x|} \int_{\RR} |\om_i(t,y)|\, dy,
\end{equation*}
hence $v_i$ belongs to $L_{\loc}^{\infty}(\R^+,L^p(\RR))$ for all $p>2$.
It follows in particular that
\begin{equation*}
v_i \in L_{\loc}^{\infty}(\R^+,W^{1,4}(\RR))
\end{equation*}
and also that $v_i \otimes v_i$ belongs to $L_{\loc}^\infty(L^{4/3})$. Since $v_i$ is divergence-free, we have $v_i \cdot \na v_i=\diver(v_i \otimes v_i)$, and so
$
v_i \cdot \na v_i \in
L_{\loc}^2\big(\R^+,W^{-1,\frac{4}{3}}(\RR)\big).
$
 Moreover,  $v_i(t)\otimes H_i(t)$ belongs to $L_{\loc}^{4/3}$,
 whereas at infinity, $H_i$ and $v_i$ are bounded
by $C/|x|$ which belongs to $L^{8/3}$. This yields
\begin{equation*}
\diver(v_i\otimes H_i),\:\:\diver(H_i\otimes v_i)\in
L_{\loc}^2\big(\R^+,W^{-1,\frac{4}{3}}(\RR)\big).
\end{equation*}
 Besides, we deduce from the embedding of
$W^{1,4}(\RR)$ in $C_0^0(\RR)$ that $\delta_{z_i}$ belongs to
$L_{\loc}^2(W^{-1,\frac{4}{3}})$. Therefore, $v_i \delta_{z_i}\in
L_{\loc}^2(\R^+,W^{-1,\frac{4}{3}}(\RR)).$

According to Proposition \ref{form_velocity}, we finally obtain
$$\langle \pd_t  v_i ,\F\rangle =\langle \pd_t v_i -\na p_i,\F\rangle
\leq C\|\F\|_{L^2(W^{1,4}_\s)}$$
for all divergence-free smooth vector field  $\F$. This implies that
\begin{equation*}
\pd_t v_i \in L_{\loc}^2\big(\R^+,W^{-1,4/3}_\s(\RR)\big), \quad i=1,2,
\end{equation*}
and the same holds for $\pd_t \vti$. Now, since $\tilde v$ belongs to
$L^2_{\loc}\big(\R^+,W^{1,4}_\s\big)$, we deduce from \eqref{bound-velocity} and Lemma 1.2 in Chapter III of
\cite{temam} that $\tilde v$ is almost everywhere
equal to a function continuous from $\R^+$ into $L^2$ and we have
in the sense of distributions on
$\R^+$:
\begin{equation*}
 \frac{d}{dt} \|\tilde v\|_{L^2(\RR)}^2 = 2
\langle \pd_t \tilde v,\tilde v \rangle_{W^{-1,4/3}_\s,W^{1,4}_\s}.
\end{equation*}
We finally conclude by using the fact that $\vti(0)=0$.
\end{proof}

\subsection{Proof of Theorem \ref{theo}}\label{sub34}

In this paragraph, we provide the proof of Theorem \ref{theo} by
making use of the equation for the velocity. To that aim, we let
$(\om_i,z_i)$, $i=1,2$ be two Eulerian solutions of \eqref{t1}, and
we follow the same notations as in the previous paragraph. From
Proposition \ref{prop : cont-velocity}, we may introduce
$$
r(t)\equiv \|
v_1(t,\cdot)-v_2(t,\cdot)\|^2_{L^2(\R^2)}+|z_1(t)-z_2(t)|^2.
$$
Let us fix a positive time $T^\ast$. We will show that $r$ is
identically zero on $[0,T^\ast]$ by mean of a Gronwall type
argument. Since $T^\ast$ is arbitrary, this will provide uniqueness
on the whole of $\R^+$. Let
\begin{equation*}
R_m=\min_{t\in [0,T^\ast]} R(t)=R(T^*), \quad R_M=\max_{t\in [0,T^\ast]} R(t)=R(0)
\end{equation*}
where $R(t)$ is the function defined in Proposition
\ref{constant_vorticity} and let $T_C$ be the time introduced in
Corollary \ref{cstmil}. Since $r(0)=0$, there exists $0<T_0\leq T_C$
such that
\begin{equation*}
r(t)\leq 1,\qquad \forall t\leq T_0.
\end{equation*}

First of all, we take advantage of the fact that $\om_i$ is constant
around the point vortex to state harmonic regularity estimates on
$\tilde v(t)$ in a neighborhood of $z_1(t), z_2(t)$. We recall that
$z(t)$ is the middle point of $[z_1(t),z_2(t)]$.
\begin{lemma} \label{harm} For all $t\leq T_C$, $\tilde v(.,t)$ is harmonic on $B(z(t), R(t)/2)$, with $R(t)>0$ and $T_C$ given in Corollary \ref{constant_vorticity}. In particular, we have the following estimates:
\begin{itemize}
\item[(1)] $\|\tilde v(t,.)\|_{L^\infty(B(z(t), R(t)/4))}\leq C \|\tilde v(t,.)\|_{L^2}$,
\item[(2)] $ \|\na\tilde v(t,.)\|_{L^\infty(B(z(t), R(t)/4))}\leq C \|\tilde v(t,.)\|_{L^2}$,
\end{itemize}
where $C$ only depends on $R(t)$.
\end{lemma}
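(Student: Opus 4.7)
The plan is to reduce the statement to the classical fact that a divergence-free, curl-free vector field has harmonic components, and then to invoke standard interior estimates for harmonic functions.

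First I would apply Corollary \ref{cstmil}: for $t\leq T_C$ both $\om_1(t)$ and $\om_2(t)$ are identically equal to $\a$ on $B(z(t),R(t)/2)$, so
\[
\tilde\om(t)=\om_1(t)-\om_2(t)\equiv 0 \quad\text{on}\quad B(z(t),R(t)/2).
\]
Since $\tilde v(t)=K\ast\tilde\om(t)$ is divergence-free by construction and satisfies $\curl\tilde v(t)=\tilde\om(t)$, on the ball $B(z(t),R(t)/2)$ we have both $\diver\tilde v(t)=0$ and $\curl\tilde v(t)=0$. Using the identity $\D w=\na\diver w-\na^{\perp}\curl w$ in $\RR$, this gives $\D \tilde v(t,\cdot)=0$ on the ball, so that each component of $\tilde v(t,\cdot)$ is harmonic there. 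This is the main structural step.

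Second, I would apply the classical interior estimates for harmonic functions. If $u$ is harmonic on $B(x_0,\rho)$, the mean value property combined with the Cauchy--Schwarz inequality gives
\[
\|u\|_{L^{\infty}(B(x_0,\rho/2))}\leq \frac{C}{\rho}\|u\|_{L^{2}(B(x_0,\rho))},
\]
while the analogous estimate applied to the (also harmonic) partial derivatives, together with the gradient-from-mean-value bound, yields
\[
\|\na u\|_{L^{\infty}(B(x_0,\rho/2))}\leq \frac{C}{\rho^{2}}\|u\|_{L^{2}(B(x_0,\rho))}.
\]
Taking $x_0=z(t)$, $\rho=R(t)/2$, applying these bounds componentwise to $\tilde v(t,\cdot)$, and majorising $\|\tilde v(t)\|_{L^{2}(B(z(t),R(t)/2))}$ by $\|\tilde v(t)\|_{L^{2}(\RR)}$, delivers both estimates (1) and (2) with a constant depending only on $R(t)$.

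I do not anticipate any genuine obstacle in this lemma: the harmonicity is immediate from Corollary \ref{cstmil}, and the interior bounds are textbook. The only point to keep in mind for the subsequent Gronwall argument of Section \ref{sub34} is that the constant degenerates as $R(t)$ shrinks, but on $[0,T^{\ast}]$ one has $R(t)\geq R_m>0$, so the constants remain bounded and harmless for the forthcoming estimates.
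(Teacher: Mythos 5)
Your proposal is correct and follows essentially the same route as the paper: harmonicity of $\tilde v$ on $B(z(t),R(t)/2)$ from $\curl \tilde v=\diver \tilde v=0$ via Corollary \ref{cstmil}, followed by the mean-value property (applied to $\tilde v$ and to $\na\tilde v$) combined with Cauchy--Schwarz to control the $L^\infty$ norms on the smaller ball by $\|\tilde v\|_{L^2}$. The paper organizes the two interior estimates in a slightly different order (bounding $|\na\tilde v(x)|$ by the sup of $\tilde v$ on a sphere first, then bounding that sup by the $L^2$ norm), but this is only a cosmetic difference.
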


\begin{proof} In this proof we set $R=R(t)$. In view of Corollary \ref{cstmil} we have  $\curl v_i =\a$ on $B(z(t),R/2)$,
 then $\curl \tilde v=\diver\tilde v=0$ which means that $\tilde v$ is harmonic on this ball: $\Delta \tilde v=0$.
Next, we apply the mean-value Theorem to $\nabla \tilde{v}$ (see e.g. Chapter 2.1 in \cite{g-t}) for all $x\in B=B(z(t),R/4)$:
\begin{equation*}
\na \vti (x) = \frac{1}{\pi (R/8)^2} \int_{B(x,R/8)} \na \vti (y)dy =  \frac{1}{\pi (R/8)^2} \int_{\pd B(x,R/8)} \vti \n ds,
\end{equation*}
therefore
\begin{equation*}
|\na \vti (x)| \leq  \frac{16}{R}  \|\vti \|_{L^\infty(\pd B(x,R/8))}.
\end{equation*}

Now, writing again the mean-value formula for $\tilde{v}$ and $x\in B(z(t),3R/8)$ we obtain
\begin{eqnarray*}
|\vti (x)| &=&\Big| \frac{1}{\pi (R/8)^2} \int_{B(x,R/8)} \vti (y)dy\Big| \\
 &\leq& \frac{1}{\pi (R/8)^2} \int_{B(x,R/8)} |\vti (y)|dy \\
&\leq& \frac{1}{\pi (R/8)^2} \|\vti \|_{L^2}\|1\|_{L^2(B(x,R/8))} \\
&\leq& \frac{8}{\sqrt{\pi}R}\|\vti \|_{L^2}.
\end{eqnarray*}
The conclusion follows.
\end{proof}

The Gronwall estimate for $r(t)$ reads as follows.
\begin{proposition}
\label{gronwall} For all $T\in[0,T_0]$, for all $p\geq 2$, we have
\begin{equation*}
r(T)\leq C\int_0^T
\left[\varphi(r(t))+p\,r(t)^{1-\frac{1}{p}}\right]\,dt,
\end{equation*}
where $C$ depends only on $T^\ast$ and $\| \om_0\|$, and with the
function $\f$ defined in Lemma \ref{loglip}.
\end{proposition}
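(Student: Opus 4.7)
The plan is to treat the two contributions to $r(t) = |\tilde z(t)|^2 + \|\tilde v(t)\|_{L^2}^2$ separately and combine the resulting differential inequalities. On $[0,T_0]$ we have $r\le 1$, and by Corollary \ref{cstmil} the vorticities $\om_1(t),\om_2(t)$ both equal $\alpha$ on $B(z(t),R(t)/2)$; consequently $\tilde v(t,\cdot)$ is harmonic on that ball and Lemma \ref{harm} is available throughout $[0,T_0]$.

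For the point--vortex piece $|\tilde z(T)|^2$, I would start from the ODE $\dot z_i(t)=v_i(t,z_i(t))$ and decompose
\begin{equation*}
v_1(t,z_1)-v_2(t,z_2) = \tilde v(t,z_1) + \bigl(v_2(t,z_1)-v_2(t,z_2)\bigr).
\end{equation*}
The second term is bounded by $C\f(|\tilde z(t)|)$ thanks to the almost-Lipschitz estimate of Lemma \ref{loglip} applied to $v_2$. The first term is estimated by $C\|\tilde v(t)\|_{L^2}$ by Lemma \ref{harm}(1), since Corollary \ref{cstmil} places $z_1(t)$ in $B(z(t),R(t)/8)\subset B(z(t),R(t)/4)$. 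Multiplying by $|\tilde z|$ and using concavity of $\f$ together with $r\le 1$ yields a contribution of the form $\int_0^T [\f(r(t))+r(t)^{1-1/p}]\,dt$ to the final bound.

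For the $L^2$ piece, I would use Proposition \ref{prop : cont-velocity} to write $\|\tilde v(T)\|_{L^2}^2 = 2\int_0^T\langle \pd_t\tilde v,\tilde v\rangle\,ds$ and plug in the difference of the two velocity equations from Proposition \ref{form_velocity}. The divergence-free character of $\tilde v$ kills $\na\tilde p$, and three groups of terms remain. The convection term is $-2\int\tilde v\cdot(\tilde v\cdot\na v_1)$, since $\int\tilde v\cdot(v_2\cdot\na\tilde v)=\frac12\int v_2\cdot\na|\tilde v|^2=0$. Using H\"older with exponents $p$ and $2p/(p-1)$, the Calder\'on--Zygmund bound $\|\na v_1\|_{L^p}\le Cp$ of Lemma \ref{calderon}, and the interpolation inequality
\begin{equation*}
\|\tilde v\|_{L^{2p/(p-1)}}^2 \le C\|\tilde v\|_{L^2}^{2(1-1/p)}\|\tilde v\|_{L^\infty}^{2/p}
\end{equation*}
together with the uniform $L^\infty$ bound on $\tilde v$ (Lemma \ref{loglip}), this contributes $Cp\,r(t)^{1-1/p}$. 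The $H$--flux terms, after writing
\begin{equation*}
v_1\otimes H_1-v_2\otimes H_2 = \tilde v\otimes H_1 + v_2\otimes\tilde H,\qquad \tilde H=H_1-H_2,
\end{equation*}
split into a piece tested against $\tilde v$ near the vortices (where $\tilde v$ is harmonic and the Dirac-like contributions from $\diver H_i$ appear) and a far-field piece where $|\tilde H(x)|\le C|\tilde z|/(|x|+1)^2$ and no singularity is present. Finally, the Dirac terms $v_1(z_1)^{\perp}\cdot\tilde v(z_1)-v_2(z_2)^{\perp}\cdot\tilde v(z_2)$ are controlled by decomposing as for $|\tilde z|$ and using Lemma \ref{harm}(1)--(2), giving pointwise bounds of $\tilde v$ and $\na\tilde v$ at $z_1,z_2$ by $C\|\tilde v\|_{L^2}$.

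The main obstacle is the $H$--term in the equation for $\tilde v$: the kernels $H_i$ are singular at $z_i$, so the integral $\int\tilde v\cdot\diver(\tilde v\otimes H_1+H_1\otimes\tilde v)$ cannot be handled by a plain integration by parts because $\diver H_1$ produces a Dirac mass at $z_1$. The key point is that, inside the common disc of harmonicity provided by Corollary \ref{cstmil}, $\tilde v$ admits mean-value representations (Lemma \ref{harm}), so the distributional terms $|\tilde v(z_1)|^2$ and $|\tilde v(z_2)|^2$ generated by $\diver H_i$ are absorbed by $C\|\tilde v\|_{L^2}^2$ and combine cleanly with the Dirac contributions $v_i(z_i)^{\perp}\delta_{z_i}$ of Proposition \ref{form_velocity}. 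Adding the estimates for $|\tilde z(T)|^2$ and $\|\tilde v(T)\|_{L^2}^2$ gives the claimed Gronwall inequality.
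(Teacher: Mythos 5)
Your plan follows the paper's proof essentially step for step: the same splitting of $r$ into the vortex part and the $L^2$ part, the same energy identity from Proposition \ref{prop : cont-velocity} tested against divergence-free approximations of $\tilde v$, the same H\"older/Calder\'on--Zygmund/interpolation treatment of the convection term producing the $p\,r^{1-1/p}$ factor, and the same use of Corollary \ref{cstmil} and Lemma \ref{harm} to convert pointwise values of $\tilde v$ and $\na \tilde v$ near the vortices into $\|\tilde v\|_{L^2}$.

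One statement in your sketch is wrong, although it happens to make you over-cautious rather than break the argument: $\diver H_1$ is \emph{not} a Dirac mass at $z_1$. Since $K=\na^\perp G$ with $G$ the Green's function of the plane, $K$ (hence each $H_i$) is divergence-free in the sense of distributions; it is $\curl H_1$ that equals $\delta_{z_1}$. The paper exploits precisely $\diver H_1=0$ to show that $\int(\tilde v\otimes H_1):\na\tilde v\,dx=\tfrac12\int H_1\cdot\na|\tilde v|^2\,dx$ vanishes identically, so no ``distributional terms $|\tilde v(z_i)|^2$'' arise there. The genuine difficulty with the remaining term $(H_1\otimes\tilde v):\na\tilde v$ is not a hidden Dirac mass but the a priori lack of integrability of $H_1\,\na\tilde v$ near $z_1$; this is resolved, as you indicate, by the harmonicity of $\tilde v$ on $B(z(t),R(t)/2)$, which gives $L^\infty$ control of $\tilde v$ and $\na\tilde v$ there, while outside that ball one integrates by parts and uses $\|\tilde H\|_{L^\infty(B^c)}+\|\na\tilde H\|_{L^2(B^c)}\leq C|z_1-z_2|$ together with $\|\tilde H\|_{L^1(B)}\leq C\f(|z_1-z_2|)$, which is where the $\sqrt{r}\,\f(\sqrt{r})\leq\f(r)$ contribution comes from. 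The true Dirac terms $v_i(z_i)^\perp\delta_{z_i}$ are already explicit in Proposition \ref{form_velocity} and need not be recovered from $\diver H_i$; your treatment of them (decompose and apply Lemma \ref{harm} and the almost-Lipschitz bound) is exactly the paper's, as is your Step for $|\tilde z|^2$.
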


\begin{proof} We proceed in several steps. Throughout the proof, $C$ will stand for a constant depending only on $R_m$ and $R_M$, therefore on $\|\om_0\|$ and $T^\ast$.\\

\noindent \textbf{Step 1}. We have for the velocities
\begin{equation}\label{step1}
\| \tilde v (T,.)\|_{L^2}^2 \leq C \int_0^T \left[r(t)+\sqrt{r(t)}\f(\sqrt{r(t)})+p\,r(t)^{1-1/p}\right] \,dt,
\end{equation}
$\forall p\geq 2,\qquad \forall T\leq T_C$.

Indeed, subtracting the two equations given by Proposition \ref{form_velocity} for $(v_i,z_i)$, we find
\begin{equation}
\label{diff_velocity}
\begin{split}
\pd_t \tilde v + \tilde v\cdot \na v_1&+v_2\cdot \na\tilde v +
\diver (\tilde v \otimes H_1+ v_2\otimes \tilde H+H_1\otimes \tilde
v+\tilde H\otimes v_2)\\ &- (v_1(z_1)^\perp \cdot
\delta(z_1)-v_2(z_2)^\perp\cdot\delta(z_2)) =-\na \tilde p.
\end{split}
\end{equation}
We then consider smooth and divergence-free functions $\F_n\in C^\infty_c\left( \R^+\times \R^2\right)$  converging to
$\tilde{v}$ in $L^2_{\loc}\left(\R^+,W^{1,4}(\RR)\right)$ as
test functions in \eqref{diff_velocity}, and let $n$ goes to $+\infty$. First, we have for all $T\in \R^+$
\begin{equation*}
\int_0^T \langle \pd_t \tilde v,\F_n
\rangle_{W^{-1,4/3}_\s,W^{1,4}_\s}\, ds\to \int_0^T
\langle \pd_t \tilde v,\tilde{v}
\rangle_{W^{-1,4/3}_\s,W^{1,4}_\s}\,ds,
\end{equation*}
and we deduce the limit in the other terms from the several bounds for $v_i$ stated in the proof of Proposition \ref{prop : cont-velocity}. This yields
\begin{equation}
\label{diff}
\begin{split}
\frac{1}{2}\| \tilde v (T,.)\|_{L^2}^2 =I+J+K,
\end{split}
\end{equation}
where
\begin{equation*}
 \begin{split}
I&=- \int_0^T \int_{\R^2}
\tilde v \cdot(\tilde v\cdot \na v_1+v_2\cdot \na\tilde v)\,dx\,dt,\\
J&= \int_0^T \int_{\R^2} (\tilde v \otimes H_1+ v_2\otimes \tilde H+H_1\otimes \tilde v+\tilde H\otimes v_2):\na\tilde v\,dx\,dt, \\
K&= \int_0^T (v_1(z_1)^\perp \cdot \tilde
v(z_1)-v_2(z_2)^\perp\cdot\tilde v(z_2))\,dt.
\end{split}
\end{equation*}
The next step is to estimate all the terms in the right-hand side.
We now consider times $T\leq T_0$ in (\ref{diff}). In order to
simplify the notation, we set $B=B(z(t),R(t)/4)$.

For the first term $I$ in \eqref{diff}, we begin by noticing that
\begin{equation*}\int_{\RR}(v_2\cdot \na\tilde v)\cdot \tilde v\,dx=\frac{1}{2}\int_{\RR} v_{2}\cdot\na |\tilde v|^2\,dx=-\frac{1}{2}\int_{\RR} |\tilde v|^2 \diver v_2\,dx=0.
\end{equation*}
Moreover, H\"older's inequality gives
\begin{equation*}
\left|\int_{\RR}(\tilde v\cdot \na v_1)\cdot \tilde v\,dx\right|\leq\|\tilde v\|_{L^2}\|\tilde v\|_{L^q}\|\na v_1\|_{L^p},
\end{equation*}
with $\frac{1}{p}+\frac{1}{q}=\frac{1}{2}$. On the one hand, Lemma
\ref{calderon} states that $\|\na v_1\|_{L^p}\leq Cp\|\om_1\|_{L^p}$
for $p\geq 2$. On the other hand, we write by interpolation
$\|\tilde v\|_{L^q}\leq \|\tilde v\|^a_{L^2}\|\tilde
v\|^{1-a}_{L^\infty}$ with
$\frac{1}{q}=\frac{a}{2}+\frac{1-a}{\infty}$. We have that
$a=1-\frac{2}{p}$, so we are led to
\begin{equation}
\label{eq : I}
|I| \leq Cp\int_0^T \|\tilde v\|_{L^2}^{2-2/p}\,dt.
\end{equation}

We now estimate $J$. We have
\begin{equation*}
\begin{split}
\int_{\RR} (\tilde v\otimes H_1): \na \tilde v\,dx&=\int_{\RR}\sum_{i,j}\tilde
v_iH_{1,j}\pd_j\tilde v_i\,dx=\frac{1}{2}\sum_i\int_{\RR}\sum_jH_{1,j}\pd_j\tilde
v_i^2\,dx\\&=-\frac{1}{2}\sum_i\int_{\RR} \tilde v_i^2 \diver H_1\,dx=0,
\end{split}
\end{equation*}
 since $H_1$ is divergence-free, and
\begin{equation}
\label{special}
\begin{split}
\Bigl| \int_0^T \int_{\R^2} (H_1\otimes \tilde v)&:\na\tilde
v\,dx\,dt \Bigl|\ \
 \leq \ \  \Bigl| \int_0^T \int_{B} (H_1\otimes \tilde v ):\na\tilde v\,dx\,dt \Bigl|\\
&+ \Bigl| \int_0^T \int_{B^c} (H_1\otimes \tilde v ):\na\tilde
v\,dx\,dt \Bigl|.
\end{split}
\end{equation}
We perform an integration by part for the second term in the
right-hand side of \eqref{special}. Arguing that $\diver \vti=0$, we obtain
\begin{equation*}
\begin{split}
\Bigl| \int_0^T \int_{\R^2} (H_1\otimes \tilde v):\na\tilde
v\,dx\,dt \Bigl|\ \
\leq \ \ & \Bigl| \int_0^T \int_{B} (H_1\otimes \tilde v ):\na\tilde v\,dx\,dt \Bigl|\\
 + \Bigl|- \int_0^T \Bigl(\int_{B^c} (\tilde v &\cdot  \na H_1 )\cdot \tilde v\,dx +
 \int_{\pd B} (H_1\cdot \vti)(\vti\cdot \n)  ds \Bigl)dt\Bigl| \\
\leq\ \ & \int_0^T  \|H_1\|_{L^1(B)}\|\tilde v\|_{L^\infty(B)}\|\na \tilde v\|_{L^\infty(B)}\,dt \\
 &+ \int_0^T \|\na H_1\|_{L^\infty(B^c)}\|\tilde v\|^2_{L^2} \,dt\\
& + \int_0^T \| H_1\|_{L^\infty(\pd B)}\|\tilde v\|^2_{L^\infty(\pd
B)}|\pd B|\,dt.
\end{split}
\end{equation*}
According to Lemma \ref{harm}, this gives
\begin{equation*}
\Bigl| \int_0^T \int_{\RR} (H_1 \otimes \tilde v):\na \tilde
v\,dx\,dt \Bigl| \ \ \leq\ \  C \int_0^T \|\tilde v\|^2_{L^2}
\,dt.
\end{equation*}
 In the same way, we obtain by integration by part
\begin{equation*}
\begin{split}
\Bigl| \int_0^T \int_{\R^2} (v_2\otimes \tilde H):\na\tilde
v\,dx\,dt \Bigl| \ \
 \leq \ \ &  \Bigl| \int_0^T \int_{B} (v_2\otimes \tilde H):\na\tilde v\,dx\,dt \Bigl|\\
 + \Bigl|- \int_0^T \Bigl(\int_{B^c} (\tilde H&\cdot  \na v_2 )\cdot \tilde v\,dx
 + \int_{\pd B} (v_2\cdot\vti)(\tilde H \cdot\n) ds \Bigl)\,dt\Bigl|.
\end{split}
\end{equation*}
Therefore,
\begin{equation*}
\begin{split}
\Bigl| \int_0^T \int_{\R^2} (v_2\otimes \tilde H):\na\tilde
v\,dx\,dt \Bigl| \ \
\leq\ \ & \int_0^T  \|\tilde H\|_{L^1(B)}\|v_2\|_{L^\infty}\|\na \tilde v\|_{L^\infty(B)}\,dt \\
&+ \int_0^T \| \tilde H\|_{L^\infty(B^c)}\|\tilde v\|_{L^2}\|\na v_2\|_{L^2} \,dt \\
+  &\int_0^T\| \tilde H\|_{L^\infty(\pd B)}\|\tilde v\|_{L^\infty(\pd B)} \| v_2\|_{L^\infty}|\pd B|\,dt.
\end{split}
\end{equation*}
Using again Calder\'on-Zygmund inequality for $v_2$ and Lemma
\ref{harm}, we get
\begin{equation*}
\begin{split}
\Bigl| \int_0^T \int_{\R^2} (v_2\otimes \tilde H):\na\tilde
v\,dx\,dt \Bigl| \ \ \leq\ \ C \int_0^T  \left(\|\tilde H\|_{L^1(B)} +\| \tilde
H\|_{L^\infty(B^c)}\right)\|\tilde v\|_{L^2} \,dt.
\end{split}
\end{equation*}
A very similar computation yields
\begin{equation*}
\begin{split}
\Bigl| \int_0^T \int_{\R^2} (\tilde H\otimes v_2):&\na\tilde
v\,dx\,dt \Bigl| \
\\ &\leq
C \int_0^T \left(\|\tilde H\|_{L^1(B)}+\| \na \tilde H\|_{L^2(B^c)} +\|
\tilde H\|_{L^\infty(\pd B)}\right) \|\tilde v\|_{L^2}\, dt.
\end{split}
\end{equation*}
We need here some estimates for $\tilde H$. We recall that
$\tilde H$ is defined by $\tilde H = H_1-H_2$, so that
$$\| \tilde H\|_{L^\infty(B^c)}= \sup_{z\in B^c}\frac{|z_1-z_2|}{2\pi|z-z_1||z-z_2|}\leq (8/R_m)^2 \frac{|z_1-z_2|}{2\pi}.$$
On the other hand, it follows from potential theory estimates (see
e.g. \cite{maj-bert}) that
\begin{equation*}
\begin{split}
\int_B |\tilde H(x) |\, dx =C \int_{B} |K(x-z_1)-K(x-z_2)|\,dx \leq C \f(|z_1-z_2|).
\end{split}\end{equation*}
Concerning the $L^2$ norm, we observe that for
$x \in B^c$,
\begin{equation*}
|\na\tilde H(x)|\leq |z_1-z_2| \sup_{[x-z_1,x-z_2]} |D^2 K|\leq
\frac{C}{|x-z|^3}|z_1-z_2|,
\end{equation*}
 which implies that $\|\na \tilde
H\|_{L^2(B^c)}\leq C |z_1-z_2|$. Therefore, we arrive at
\begin{equation*}
|J|\leq 2C \int_0^T \Big( \|\tilde v\|_{L^2}+ |z_1-z_2|+\f(|z_1-z_2|)\Big) \|\tilde v\|_{L^2}\,dt.
\end{equation*}
Since $\f$ is increasing, this implies
\begin{equation}
 \label{eq : J}
|J|\leq C \int_0^T \left[r(t)+\sqrt{r(t)}\f(\sqrt{r(t)})\right] \,dt.
\end{equation}

Finally, we decompose the third term $K$ in \eqref{diff} as follows:
\begin{equation*}
 \begin{split}
v_1(z_1)^\perp \cdot \tilde v(z_1)-v_2(z_2)^\perp&\cdot\tilde v(z_2) = \left(v_1(z_1)^\perp-v_1(z_2)^\perp\right)\cdot \tilde v(z_1)\\
& +\left(v_1(z_2)^\perp-v_2(z_2)^\perp\right)\cdot \tilde v(z_1)\\& +
v_2(z_2)^\perp \cdot \left(\tilde v(z_1)-\tilde v(z_2)\right).
\end{split}
\end{equation*}
Applying Lemma \ref{loglipschitz} to $v_1$, we obtain
\begin{equation*}
\begin{split}
|v_1(z_1)^\perp \cdot \tilde v(z_1)-v_2(z_2)^\perp \cdot\tilde v(z_2)| \leq &C|\tilde v(z_1)| \f(|z_1-z_2|) + |\tilde
v(z_2)||\tilde v(z_1)|\\
&+\|v_2\|_{L^\infty} \|\na\tilde{v}\|_{L^\infty([z_1,z_2])} |z_1-z_2|,
\end{split}
\end{equation*}
so that Lemma \ref{harm} finally yields
\begin{equation}
 \label{eq : K}
|K|\leq C \int_0^T \left[r(t)+\sqrt{r(t)}\f(\sqrt{r(t)})\right]\, dt.
\end{equation}
Estimates \eqref{eq : I}, \eqref{eq : J} and \eqref{eq : K} complete the proof of \eqref{step1}. \\ \\

\noindent \textbf{Step 2}. We have for the points vortex
\begin{equation*}
|z_1(T)-z_2(T)|^2 \leq C\int_0^T \left[r(t)+\sqrt{r(t)}\f(\sqrt{r(t)})\right]\,dt,\qquad \forall T\leq T_C.
\end{equation*}

Indeed, since $z_1$ and $z_2$ are Lipschitz, their derivatives exist for almost every time $t$, and we have at these points
\begin{equation*}
\begin{split}
\frac{d}{dt} |z_1-z_2|^2 &= 2 \langle z_1-z_2, v_1(z_1)-v_2(z_2)\rangle \\
&= 2 \langle z_1-z_2,v_1(z_1)-v_1(z_2)\rangle +2 \langle z_1-z_2,
\tilde v(z_2)\rangle.
\end{split}
\end{equation*}
This yields in view of Lemmas \ref{harm} and \ref{loglip}
\begin{equation*}
\frac{d}{dt} |z_1-z_2|^2\leq C|z_1-z_2| \f(|z_1-z_2|) + |z_1-z_2|
\| \tilde v \|_{L^2},
\end{equation*}
and we conclude by integrating the previous inequality.\\

Finally, we observe that for $z\leq 1$, we have  $$z\f(z) \leq
\f(z^2),\qquad z\leq \varphi(z).$$ Proposition \ref{gronwall} then
directly follows from Steps 1 and 2 and the definition of $T_0$.
\end{proof}

Theorem \ref{theo} is now an easy consequence of the following

\begin{lemma}
\label{est} We have for all $p\geq 1$
\begin{equation*}
 \f(t)\leq
p\,t^{1-\frac{1}{p}}, \qquad \forall t\geq 0,
\end{equation*}
where $\f$ is defined in Lemma \ref{loglip}.
\end{lemma}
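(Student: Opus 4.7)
The plan is to verify the inequality by splitting into the two regimes on which $\varphi$ is defined piecewise. For $t\geq 1$, one has $\varphi(t)=1$ and $p\, t^{1-1/p}\geq p\cdot 1\geq 1$, so the bound is immediate from $p\geq 1$. The substantive case is $0<t<1$, where the inequality reads
$$
t(1-\ln t)\leq p\, t^{1-1/p}.
$$
Dividing by $t>0$, it is equivalent to $1-\ln t\leq p\, t^{-1/p}$.

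To handle this I would make the substitution $s=t^{-1/p}$, so that $s>1$ and $\ln t=-p\ln s$. The inequality then becomes
$$
1+p\ln s\leq p\,s,\qquad\text{i.e.,}\qquad \ln s\leq s-\frac{1}{p}.
$$
This is where the hypothesis $p\geq 1$ enters: since $1/p\leq 1$, one has $s-1/p\geq s-1$, so it suffices to invoke the classical bound $\ln s\leq s-1$ valid for all $s>0$ (with equality only at $s=1$). The case $t=0$ is covered by continuity, since $\varphi(0)=0$.

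As an alternative, a direct calculus approach works equally well: define $f(t)=t^{1/p}(1-\ln t)$ on $(0,1]$ and differentiate to find a unique critical point at $t_\ast=e^{1-p}\in(0,1]$, with maximum value $f(t_\ast)=p\,e^{1/p-1}\leq p$ (using again $p\geq 1$). Comparing with $f(1)=1\leq p$ and $f(0^+)=0$ gives the bound. There is no genuine obstacle here — the estimate reduces to the concavity of the logarithm, and the only care required is the boundary analysis at $t=0$ and the correct monotonicity of $1/p$ in $p$.
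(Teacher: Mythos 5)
Your proof is correct. Your primary argument differs from the paper's: after reducing to $1-\ln t\leq p\,t^{-1/p}$ on $(0,1)$, you substitute $s=t^{-1/p}$ and reduce everything to the classical concavity bound $\ln s\leq s-1$ together with $1/p\leq 1$; the paper instead studies $f_p(t)=t^{1/p}(1-\ln t)$ directly, locates its unique critical point at $t=e^{1-p}$, and evaluates $f_p(e^{1-p})=p\,e^{1/p-1}\leq p$ --- which is precisely the ``alternative'' you sketch at the end. The two routes are of essentially equal length and both are airtight; yours has the mild advantage of outsourcing the optimization to a single standard inequality rather than a sign analysis of $f_p'$, while the paper's version makes the sharp constant $p\,e^{1/p-1}$ visible (showing the factor $p$ is attained only up to the loss $e^{1/p-1}$). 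Your handling of the regime $t\geq 1$ (where $\f(t)=1\leq p\,t^{1-1/p}$) and of $t=0$ by continuity is fine and matches what the paper leaves implicit in ``the result is obvious for $t\geq 1$.''
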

Let us assume Lemma \ref{est} for a moment and finish the proof of Theorem \ref{theo}. We deduce from
Proposition \ref{gronwall} that for all $T\leq T_0$,
\begin{equation*}
r(T) \leq C\int_0^T p\,r(t)^{1-\frac{1}{p}}\,dt.
\end{equation*}
Using a Gronwall-like argument, this implies
\begin{equation*}
r(T)\leq (CT)^p,\qquad \forall p\geq 2.
\end{equation*}
Letting $p$ tend to infinity, we conclude
that $r(T)\equiv 0$ for all $T<\min (T_0, 1/C)$. Finally, we consider the maximal interval of $[0,T^\ast]$ on which $r\equiv 0$, which is closed by continuity of $r$. If it is not equal to the whole of $[0,T^\ast]$, we may repeat the proof above, which leads to a contradiction by maximality. Therefore uniqueness holds on $[0,T^\ast]$, and this concludes the proof of Theorem \ref{theo}. \\

\textbf{Proof of Lemma \ref{est}}. The
result is obvious for $t\geq 1$. Let $f_p(t)=t^{\frac{1}{p}} (1-\ln
t)$. It suffices to show that $f_p(t)\leq p$. Computing
$f_p'(t)=t^{\frac{1}{p}-1} \big( \frac{1}{p} (1-\ln t)-1\big)$, we
observe that $f_p'(t)\geq 0$ if and only if $t\leq e^{1-p}$. Then $f_p$ is maximal when $t=e^{1-p}$ and we infer that
\begin{equation*}
f_p(t)\leq f_p(e^{1-p})=pe^{\frac{1}{p}-1}\leq p
\end{equation*}
for $p\geq 1$. This completes the proof.


\section{Final remarks and comments} \label{final}

\subsection{An alternative approach to uniqueness}

In this subsection, we present an alternative approach for proving Theorem \ref{theo}, which was indicated to us by one of the referees. In contrast with our proof, which is uniquely PDE based, it is rather Lagrangian based but still relies on Proposition \ref{constant_vorticity}. Let $T>0$, and let $\om$ be an Eulerian solution of \eqref{t1} on $[0,T]$ satisfying the assumptions of Theorem \ref{theo}. Let $\e$ such that $\e<R(T)$, where $t\mapsto R(t)$ is defined in Proposition \ref{constant_vorticity}. Then $\om$ is also a weak solution of the regularized equation
\begin{equation}
\label{eq : regul_equation}
\pd_{t} \om+\diver\big((v+K_\e(x-z(t))\om\big)=0,
\end{equation}
where $K_\e$ is a smooth, bounded and divergence-free map on $\RR$ which coincides with the Biot-Savart kernel $K$ on $B(0,\e)^c$. Indeed, let $\psi$ be a test function, then for all $t\in [0,T]$ we have
\begin{equation*}
\begin{split}
\int_{\RR} \om K(x-z(t))\cdot\nabla \psi\,dx=&\int_{B(z(t),\e)}\om K(x-z(t))\cdot\nabla \psi\,dx\\
&+\int_{B(z(t),\e)^c}\om K_\e(x-z(t))\cdot \nabla \psi\,dx\\
=&\alpha\int_{B(z(t),\e)}\big[K(x-z(t))-K_\e(x-z(t))\big]\cdot\nabla \psi\,dx\\
&+\int_{\RR}\om K_\e(x-z(t))\cdot\nabla\psi\,dx,
\end{split}
\end{equation*}
where the last equality is due to the fact that $\om(t)\equiv \alpha$ on $B(z(t),\e)$ for $t\in[0,T]$ by Proposition \ref{constant_vorticity}. Using the fact that $K$ and $K_\e$ are divergence-free and integrating by part yields
\begin{equation*}
\int_{B(z(t),\e)}\big[K(x-z(t))-K_\e(x-z(t))\big]\cdot\nabla \psi\,dx=0.
\end{equation*}
Clearly, $v(t)+K_\e(\cdot-z(t))$ is almost Lipschitz for all time, and it can be shown that it is moreover continuous in time. This  means that $\om$ is constant along $C^1$ trajectories $t\mapsto X_\e(x,t)$ satisfying
\begin{equation*}
\frac{d}{dt}X_\e(t,x)=v(t,X_\e(t,x))+K_\e(X_\e(t,x)-z(t)),\quad X_\e(0,x)=x\neq z_0.
\end{equation*}
Therefore, proving uniqueness for the Eulerian formulation on $[0,T]$ when the vorticity is constant near the point vortex amounts to proving uniqueness for this Lagrangian formulation. Since the singular part $H$ is replaced by a bounded and Lipschitz field $H_\e$, this can be achieved by means of Lagrangian methods.

\subsection{Equivalence of Lagrangian and Eulerian formulations}

In Section \ref{section2} of this work, we have proved that Lagrangian solutions are always Eulerian solutions. Given the global existence of Lagrangian solutions proved
by Marchioro and Pulvirenti \cite{mar_pul}, we obtain as a byproduct of Theorem
\ref{theo} that an Eulerian solution is also a Lagrangian solution. Therefore
Definitions \ref{defLAGR} and \ref{defPDE} are equivalent if the
initial vorticity belongs to $\llll$, is compactly supported and
constant near $z_0$.

 As a matter of fact, the renormalization Lemma \ref{renorm1} and Theorem \ref{thmequivalence} enable to establish the equivalence of Lagrangian and Eulerian formulations in the general case, without assuming that $\om_0$ is constant near the point vortex. More precisely, we have

\begin{proposition}
\label{reciproque} Let $(\om,z)$ be an Eulerian solution of \eqref{t1} with
initial datum $(\om_0,z_0)$. Then
\begin{enumerate}
\item $v=K\ast \om\in C(\R^+\times\RR)\cap L^\infty(\R^+,\mathcal{AL})\cap L^\infty(\R^+\times \RR)$,

\item The trajectory $t\mapsto z(t)$ belongs to $C^1(\R^+)$ and
satisfies $\dot{z}(t)=v(t,z(t))$ for all $t$,

\item For all $x\in \RR/\{z_0\}$, there exists a unique and global flow
$\phi_t(x)$ which is $C^1$ in time, and such that
$(\om,v,z,\phi)$ is a global solution to \eqref{mixte}.
\end{enumerate}
\end{proposition}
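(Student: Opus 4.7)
Assertion (1) is essentially a restatement of bounds already exploited throughout the paper: $v\in L^\infty(\R^+\times\RR)$ and $v\in L^\infty(\mathcal{AL})$ follow pointwise in time from Lemma \ref{loglip}. For joint continuity in $(t,x)$, I would combine the uniform spatial modulus $\f$ with continuity of $\om$ in time as a weakly $L^2_{\loc}$-valued map, which holds since $\pd_t\om=-\diver((v+H)\om)$ is bounded in $L^1_{\loc}(\R^+,W^{-1,q}_{\loc}(\RR))$ for $q<2$, exactly as in the proof of Proposition \ref{constant_vorticity}. Assertion (2) is then an immediate consequence of the integral identity in Definition \ref{defPDE} and the continuity of $v$.

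The substance is (3). For each fixed $x\neq z_0$, I would solve the ODE
\begin{equation*}
\dot\f(t)=v(t,\f(t))+K(\f(t)-z(t)),\qquad \f(0)=x,
\end{equation*}
locally in time. As long as $\f(t)\neq z(t)$, the right-hand side is continuous in $t$ and has an Osgood-type modulus in the spatial variable (almost-Lipschitz from $v$, smooth from $K(\cdot-z(t))$ away from the vortex), which supplies a unique maximal $C^1$ solution. To prevent collision in finite time, I would set $\r(t)=|\f(t)-z(t)|$ and use the orthogonality $K(y)\cdot y=0$ built into \eqref{i5} together with $\dot z(t)=v(t,z(t))$ to compute
\begin{equation*}
\dot\r(t)=\frac{\f(t)-z(t)}{|\f(t)-z(t)|}\cdot\bigl(v(t,\f(t))-v(t,z(t))\bigr),
\end{equation*}
so that $|\dot\r(t)|\leq C\f(\r(t))$ by the almost-Lipschitz estimate \eqref{loglipschitz}. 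Since $\int_0^1 dr/\f(r)=+\infty$, Osgood's lemma gives a strictly positive lower bound on $\r$ on every compact interval, so $\f_t(x)$ extends globally. Measure preservation of $\f_t$ comes from $\diver(v+H)=0$ on $\mathcal{G}$ by the classical change-of-variables, and the homeomorphism property from running the ODE backwards in time.

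It then remains to identify $\om(t,\f_t(x))$ with $\om_0(x)$. I would set $\tilde\om(t,\cdot)=\om_0\circ\f_t^{-1}$, which is an $L^\infty$ function with the same $L^1\cap L^\infty$ norms as $\om_0$. Since $\f_t$ is the flow of $v+H$, a direct change-of-variables argument shows that $\tilde\om$ is a weak solution of the linear transport equation
\begin{equation*}
\pd_t f+\diver\bigl((v+H)f\bigr)=0,\qquad f(0)=\om_0,
\end{equation*}
where $v$ and $H$ are those attached to the given Eulerian solution. The original $\om$ solves the same linear equation. The proof of Lemma \ref{renorm1} adapts verbatim to any $L^\infty$ weak solution of this linear transport equation, since it only exploits the Sobolev regularity of $v$ (via Calder\'on--Zygmund) and the radial cut-off structure that cancels against $H$. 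Applying the renormalization with $\b(s)=s^2$ and $\p\equiv 1$ to $\om-\tilde\om$ yields $\|\om(t)-\tilde\om(t)\|_{L^2}^2\equiv 0$, hence $\om=\tilde\om$ and the Lagrangian formulation holds. The main obstacle I anticipate is precisely this last step: one must carefully verify that the renormalization framework of Subsection \ref{subsection31}, originally stated for Eulerian solutions of the full nonlinear system, transfers cleanly to every $L^\infty$ weak solution of the linear transport equation driven by the singular field $v+H$.
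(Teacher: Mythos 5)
Your plan follows the paper's own proof almost step for step: continuity of $v$ obtained from time-continuity of $\om$ combined with the uniform almost-Lipschitz spatial modulus, local well-posedness of the trajectory ODE via the almost-Lipschitz (Osgood) extension of Cauchy--Lipschitz, absence of collision from the orthogonality $K(y)\cdot y=0$ and the resulting bound $|\dot\rho|\leq C\varphi(\rho)$, and finally the identification of $\om$ with $\om_0\circ\phi_t^{-1}$ by applying the renormalization Lemma \ref{renorm1} (with $\beta(s)=s^2$ and $\psi\equiv 1$, i.e.\ Remark \ref{remark : conserv}) to the difference of two $L^\infty$ weak solutions of the same linear transport equation, which is exactly how the paper concludes. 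The one point you gloss over is measure preservation: since $v$ is only almost-Lipschitz the classical Liouville/change-of-variables argument does not apply directly to $\phi_t$, and the paper instead approximates $v$ and $K$ by smooth divergence-free fields, applies Liouville's theorem to the regularized flows, and passes to the limit using the no-collision estimate.
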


\begin{proof} 
We start by proving (1), which clearly implies (2). By Lemma \ref{loglip} we only have to show the time continuity. First, we claim that 
\begin{equation}\label{clp}
\om\in C(\R^+,L^p),\quad 1<p<+\infty.
\end{equation}
Indeed, we already know by the proof of Proposition \ref{constant_vorticity} that it holds for $p=2$, and we conclude by interpolation since $\om\in L^\infty(\R^+,L^1\cap L^\infty(\R^2))$.

Next, for all $t,s\in \R^+$, we have
\begin{equation*}
\begin{split}
|v(t,x)-v(s,x)|\leq C\int_{|x-y|\leq 1}
&\frac{|\om(t,y)-\om(s,y)|}{|x-y|}\,dy\\
&+C\int_{|x-y|\geq 1} \frac{|\om(t,y)-\om(s,y)|}{|x-y|}\,dy.
\end{split}
\end{equation*}
We choose $2<q<+\infty$ and $1<p<2$ and apply H\"older's inequality
to each term in the r.h.s.  to get
\begin{equation*}
\sup_{x\in \RR}|v(t,x)-v(s,x)|\leq C \|\om(t)-\om(s)\|_{L^{q}}+ C
\|\om(t)-\om(s)\|_{L^{p}}.
\end{equation*}
Since on the other hand $v\in L^\infty(\mathcal{AL})$, we infer from \eqref{clp} that $v$ is continuous in space and time.

Therefore, since on the other hand $K$ is bounded and Lipschitz away from zero, we may apply the
extension of the Cauchy-Lipschitz Theorem to the class of 
functions satisfying (1) (see Lemma 3.2 in \cite{livrejaune}): for all $x\in \RR/\{z_0\}$, there
exist $T(x)>0$ and a unique $C^1$ trajectory $\phi_t(x)$ on $[0,T(x))$
such that
\begin{equation*}
\frac{d}{dt} \phi_t(x)=v\big(t,\phi_t(x)\big)+K\big(\phi_t(x)-z(t)\big).
\end{equation*}
On the other hand, using the fact that $K(X)\cdot X=0$ for all $X\neq 0$ it can be shown (see \cite{mar_pul}) that for all $t\in [0,T(x))$ we have 
\begin{equation}
 \label{ineq : no-collision}
|\phi_t(x)-z(t)|\geq A(t) |x-z_0|^{B(t)},
\end{equation}
 where $A(t)$ and $B(t)$ are positive functions depending only on $\|\om_0\|$. This implies that $T(x)=+\infty$. 

The fact that $\phi_t$ is an homeomorphism is standard when there is no point vortex. It can be extended to our case to show that $\phi_t$ is an homeomorphism: $\R^2\setminus \{z_0\} \to \R^2\setminus \{z(t)\}$ by using \eqref{ineq : no-collision}.

 Finally, in order to show the conservation of the Lebesgue's measure, we approximate $v$ by a sequence of smooth, divergence-free fields $(v_\eps)_{0<\eps<1}$, and we denote by $\phi_\eps$ and $z_\eps$ the flows associated to $v_\eps+K_\eps(\cdot-z_\eps)$ and $v_\eps$ respectively, where $K_\eps$ is the smooth divergence-free map defined in the previous subsection. Thanks to \eqref{ineq : no-collision}, we readily check that, up to a subsequence, $z_\eps$ converges to $z$ on compact sets of $\R^+$ and $\phi_\eps$ to $\phi$ on compact sets of $\R^+\times \RR\setminus \{z_0\}$. Now, as $v_\eps(t)+K_\eps(\cdot-z_\eps(t))$ is divergence-free, Liouville's Theorem (see e.g. Appendix 1.1 in \cite{livrejaune}) ensures that $\phi_\eps(t)$ preserves Lebesgue's measure for all $t\geq 0$. Letting $\eps$ tend to zero, we thus obtain that it also holds for $\phi_t$. 

It only remains to check that $\om$ is transported by the flow. For that
purpose, we define $\overline{\om}(t,x) =\om_0(\phi_t^{-1}(x))$,
so that $\overline{\om}(0,x)=\om_0(x)$. It follows from Theorem \ref{thmequivalence} that $\overline{\om}\in L^\infty(\R^+,L^1\cap L^\infty(\RR))$ is a weak
solution to the linear transport equation
\begin{equation*}
\dt \overline{\om} + u\cdot \nabla \overline{\om}=0.
\end{equation*}
Now, according to Remark \ref{remark : conserv} (2) applied to $\om-\overline{\om}$, we have
\begin{equation*}
\|\om(t)-\overline{\om}(t)\|_{L^2}\equiv 0,\quad t\geq 0,
\end{equation*}
and we infer that $\om(t,x)=\overline{\om}(t,x)$ for a.e. $x\in \RR$.

This concludes the proof of Proposition \ref{reciproque}.
\end{proof}

\subsection{The case of several point vortices}
In this paper, we have only considered the vortex-wave system with one single point vortex. In the case of a finite number $N$ of vortices $z_i$ with real intensities $d_i,i=1,\ldots,N$, the vortex-wave system \eqref{mixte} modifies as follows:
\begin{equation}\tag{LFN}
\begin{cases}
v(\cdot,t)=(K\ast \omega)(\cdot,t),\\
\displaystyle \dot{z_i}(t)=v(t,z_i(t))+\sum_{\substack{j=1 \\j\neq i}}^N d_j K\left(z_i(t)-z_j(t)\right),\\
 z_i(0)=z_{i,0},\\
\displaystyle \dot{\,\phi}_t(x)=v(t,\phi_t(x))+\sum_{j=1}^N d_j K \left(\phi_t(x)-z_j(t)\right),\\
\phi_0(x)=x, \; x\neq z_{j,0} ,\\
\omega(\phi_t(x),t)=\omega_0(x),\qquad t\in \R.
\end{cases}
\label{mixteN}
\end{equation}
In this situation, every vortex trajectory $z_i(t)$ is submitted to
the fields generated by the other vortices and to the regular field
$v$, and the regular part moves under the action of the field
created by itself and by the $N$ vortices. The velocity fields
appearing in \eqref{mixteN} are well-defined as long as the flow and
the vortices remain separated. If the intensities $d_i$ all have the
same sign, it has been established in \cite{mar_pul} that no
collision among the vortices and the flow can occur in finite time,
and global existence for \eqref{mixteN} has been proved for an
arbitrary initial vorticity $\om_0 \in \llll$ and $N$ distinct
vortices $z_{i,0}$.

In particular, given any time $T>0$, there exists a positive $a$ such that up to time $T$, we have
 $|z_i(t)-z_j(t)|\geq a$. So, the field created by a vortex near the other vortices is Lipschitz and bounded.
  Localizing then the test functions used throughout the proofs in Sections 2 and 3 near each vortex,
  we may extend Theorems \ref{thmequivalence} and \ref{theo} to the case of many vortices and obtain uniqueness
  for the corresponding Eulerian formulation to \eqref{mixteN} when the vorticity is initially constant near each point vortex.
   This gives more precisely
\begin{theorem}
\label{theoN} Let $\om_0\in L^1\cap L^\infty(\RR)$ and
$z_{1,0},\ldots,z_{N,0}$ be $N$ distinct points in $\RR$ with
positive intensities $d_i$. Assume that there exist a positive
$R_0$, which is smaller than the minimal distance between the
initial vortices, and $\a_i\in\R$ such that for all $i$,
\begin{equation*}
\om_0 \equiv \a_i  \: \: \textrm{on }\: B(z_{i,0},R_0).
\end{equation*}
Suppose in addition that  $\om_0$ has compact support. Then there
exists a unique Eulerian solution of the vortex-wave system with
this initial data.
\end{theorem}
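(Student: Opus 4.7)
The plan is to mimic the arguments of Sections \ref{section2} and \ref{section3} while localizing near each point vortex, relying on two pillars: the global existence result of Marchioro--Pulvirenti for positive intensities, which guarantees that the $N$ vortices $z_i(t)$ stay at uniformly positive mutual distance $|z_i(t)-z_j(t)|\geq a>0$ on any fixed interval $[0,T^\ast]$, and the fact that, away from the vortices, the singular part $\sum_j d_j K(x-z_j(t))$ is smooth, divergence-free, and bounded Lipschitz (with constants depending only on $a$). The notions of Lagrangian and Eulerian solution extend to \eqref{mixteN} in the obvious way, with $H(t,x)=\sum_j d_j K(x-z_j(t))$, and the implication ``Lagrangian $\Rightarrow$ Eulerian'' (analog of Theorem \ref{thmequivalence}) follows verbatim after replacing the single cutoff $\chi_\delta(x-z(t))$ by the product $\prod_{j=1}^N \chi_\delta(x-z_j(t))$, which still satisfies the key cancellation $H\cdot\nabla\bigl(\prod_j \chi_\delta(x-z_j(t))\bigr)=0$ thanks to the radial symmetry of each $\chi_\delta$ and the orthogonality $K(y)\cdot y=0$.

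First, I would upgrade the renormalization Lemma \ref{renorm1} to this framework: the commutator lemma still applies on compact subsets of $\Gc=\R^+\times\RR\setminus\bigcup_i\Sigma_i$, and the passage to the limit $\delta\to 0$ uses the same radial cancellation on each of the $N$ factors. From this, Proposition \ref{constant_vorticity} localizes to each vortex: there exist continuous positive functions $R_i(t)$, depending only on $t$, $R_0$, $\|\om_0\|$ and the lower bound $a$ (which enters only to bound $v=K\ast\om$ uniformly), such that $\om(t)\equiv\alpha_i$ on $B(z_i(t),R_i(t))$ for all $t\in[0,T^\ast]$. The proof is identical to the one-vortex case since the test function $\F_0(|x-z_i(t)|/R_i(t))$ is supported away from all other vortices once $R_i(t)$ is chosen small enough compared to $a$. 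The analog of Corollary \ref{cstmil} then follows: for two Eulerian solutions $(\om_1,z^1)$ and $(\om_2,z^2)$ and $T_C$ small enough, $\om_1(t)\equiv\om_2(t)\equiv\alpha_i$ on a common ball $B(\bar z_i(t),\rho_i(t))$ around the midpoint $\bar z_i(t)$ of $[z_i^1(t),z_i^2(t)]$, and both $z_i^1(t)$ and $z_i^2(t)$ sit well inside this ball.

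Next I would derive the weak formulation of the velocity for \eqref{mixteN} (analog of Proposition \ref{form_velocity}): the only modification is that the right-hand side picks up a sum of $N$ Dirac masses $-\sum_i d_i v(z_i(t))^\perp \delta_{z_i(t)}$, plus the interaction terms $-\sum_{i\neq j}d_i d_j K(z_i-z_j)^\perp\delta_{z_i}$ coming from the ODEs for $\dot z_i$. Defining
\[
r(t)=\|\tilde v(t)\|_{L^2(\RR)}^2+\sum_{i=1}^N |z_i^1(t)-z_i^2(t)|^2,
\]
I would establish the harmonic estimate (analog of Lemma \ref{harm}) on each ball $B(\bar z_i(t),\rho_i(t)/2)$ separately: $\tilde v$ is harmonic there because both vorticities are constant, hence the $L^\infty$ and gradient bounds on $\tilde v$ near each $z_i^k$ are controlled by $\|\tilde v\|_{L^2}$ up to constants depending only on the $R_i$'s. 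The velocity Gronwall estimate of Proposition \ref{gronwall} then reproduces, splitting each integral $\int_{\RR}$ into $\int_{B_i}+\int_{B_i^c}$ successively for each vortex $i$; the new cross-term $\tilde H_i\otimes v_2$, $v_2\otimes \tilde H_i$ and $H_{1,i}\otimes \tilde v$ produced by vortex $i$ are estimated exactly as in Step~1 of the proof of Proposition~\ref{gronwall}, and summing over $i$ yields only an extra factor $N$ in front of the constant. For the vortex part, the new contributions $d_i d_j[K(z_i^1-z_j^1)-K(z_i^2-z_j^2)]$ with $i\neq j$ are controlled by $C|z_i^1-z_i^2|+C|z_j^1-z_j^2|$ since $K$ is Lipschitz on $\{|x|\geq a\}$, again with constants depending only on $a$; this is compatible with the Gronwall inequality.

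The main technical obstacle is making sure that all the balls $B_i$ used in the localization arguments remain disjoint and at controlled distance from every other vortex throughout $[0,T_C]$, so that the harmonic estimates, the various integration-by-parts, and the boundary terms can be handled independently around each vortex. This is where the positivity of the intensities is crucial: it provides the uniform no-collision bound $a>0$ (from \cite{mar_pul}) which fixes, once and for all, the thresholds $R_i(t)$ and $T_C$ in terms of data, and which makes all $K$-interactions between distinct vortices Lipschitz. Once the Gronwall estimate $r(T)\leq C\int_0^T[\f(r(t))+p\,r(t)^{1-1/p}]\,dt$ is established on $[0,T_C]$, Lemma \ref{est} and the same $p\to+\infty$ argument give $r\equiv 0$ on a right neighborhood of $0$, and a standard continuation argument (exactly as at the end of Section~\ref{sub34}) propagates this to the whole of $[0,T^\ast]$, which completes the proof of Theorem~\ref{theoN}.
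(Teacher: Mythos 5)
Your proposal follows essentially the same route as the paper, which itself only sketches the $N$-vortex case by invoking the Marchioro--Pulvirenti no-collision bound for same-sign intensities and localizing the test functions of Sections \ref{section2} and \ref{section3} near each vortex; your write-up is a faithful (and more detailed) expansion of that sketch, and the overall architecture — renormalization, conservation of $\om\equiv\a_i$ near each $z_i(t)$, the velocity formulation with $N$ Dirac masses, harmonic estimates on disjoint balls, and the Gronwall argument with the extra Lipschitz interaction terms $d_id_j[K(z_i^1-z_j^1)-K(z_i^2-z_j^2)]$ — is correct.

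One claim should be corrected: the identity $H\cdot\nabla\bigl(\prod_{j}\chi_\delta(x-z_j(t))\bigr)=0$ is false. Writing $\nabla\prod_j\chi_\delta(x-z_j)=\sum_k\bigl(\prod_{j\neq k}\chi_\delta(x-z_j)\bigr)\nabla\chi_\delta(x-z_k)$, the radial symmetry of $\chi_\delta$ and the orthogonality $K(y)\cdot y=0$ only kill the \emph{self}-interaction terms $d_kK(x-z_k)\cdot\nabla\chi_\delta(x-z_k)$; the cross terms $d_jK(x-z_j)\cdot\nabla\chi_\delta(x-z_k)$ with $j\neq k$ do not vanish. They are harmless, because $\nabla\chi_\delta(x-z_k)$ is supported in the annulus $\delta/2\leq|x-z_k|\leq\delta$, where $K(x-z_j)$ is bounded by $C/a$ for $\delta$ small compared to the separation $a$; hence these contributions are controlled by $C\|\nabla\chi_\delta\|_{L^1(\RR)}\to 0$, exactly like the $\psi(v-\dot z)\cdot\nabla\chi_\delta$ terms in \eqref{r3}. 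The same remark applies in the localized version of Proposition \ref{constant_vorticity}: with the test function centered at $z_i(t)$ one has $u\cdot\na\F=\bigl(v+\sum_{j\neq i}d_jK(\cdot-z_j)\bigr)\cdot\na\F$ rather than $v\cdot\na\F$, and the extra field must be compared to its value at $z_i(t)$ (which appears in $\dot z_i$) using the Lipschitz bound for $K$ on $\{|y|\geq a/2\}$; this yields an additional $C|x-z_i|\leq C\f(|x-z_i|)$ which is absorbed into the same choice of $R_i(t)$. With these two adjustments your argument is complete and coincides with the paper's intended proof.
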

It follows in particular from Theorem \ref{theoN} that equivalence
between Eulerian and Lagrangian formulations also holds in the case
of several point vortices.

\subsection{Uniqueness when the vortex point is fixed}

In this subsection, we address the problem of uniqueness to a slightly
different equation from \eqref{t1}. The main difference is
that the point vortex is fixed (for instance at the origin) instead
of moving under the action of the velocity. It reads
\begin{equation}
\label{s1}
\begin{cases}
\dt \om + u\cdot \nabla \om=0, \\
u(t,x)=v(t,x)+\g K(x),\qquad v=K\ast \om,
\end{cases}
\end{equation}
where $\g \in \R$. This system is obtained by Iftimie, Lopes Filho
and Nussenzveig Lopes in \cite{ift_lop} as an asymptotical equation
for the classical Euler equations on exterior domains. More
precisely, they consider a family of obstacles $\OM_\e\equiv \e \OM$
contracting to a point as $\e\to 0$, where $\OM$ is a smooth,
bounded, open, connected, simply connected subset of the plane.
Throughout \cite{ift_lop}, the authors assume that the initial
vorticity $\om_0$ is independent of $\e$, smooth, compactly
supported outside the obstacles $\OM_\e$ and that the circulation
$\g$ of the initial velocity on the boundary is independent of $\e$.
The authors prove that as $\eps$ goes to $0$, the flow converges to
a a global solution of equation \eqref{s1}. Of course, this system
reduces to the classical Euler equations when $\g=0$, for which
uniqueness is known in the class $\lllll$ \cite{yudo}.

Equations \eqref{s1} have also been considered in the Lagrangian
formulation by Marchioro \cite{mar} in the case where the support of
$\om_0$ does not intersect the origin and in a smooth setting. In
this paper, it is proved that for a $C^2$ vorticity, a trajectory
starting away from the origin never reaches it. This provides in
particular uniqueness in the Lagrangian formulation in this case.

According to Section \ref{subsection31} of this work, it is actually
possible to adapt the key idea used in \cite{mar} to equations
\eqref{s1} without relying on the trajectories. In particular, we
first prove that if the initial vorticity $\om_0$ vanishes in a
neighborhood of the origin, then this holds for all time.

\begin{proposition}
\label{constant_vorticity_2} Let $\om$ be a global Eulerian solution
of \eqref{s1} such that
\begin{equation*}
\supp \om_0\subset B(0,R_0^{-1})\setminus B(0,R_0)
\end{equation*}
for some $0<R_0<1$. Then there exist positive constants $C_1$ and
$C_2$ depending only on $R_0$ and $||\om_0||$ such that
\begin{eqnarray*}
\om(t)\equiv 0 \qquad \textrm{on \; \;} B\left(0,C_1e^{-C_2
t}\right),\qquad \forall t\geq 0.
\end{eqnarray*}
\end{proposition}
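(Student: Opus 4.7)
The plan is to adapt the proof of Proposition \ref{constant_vorticity} to the fixed-vortex equation \eqref{s1}. First I would verify that all of Section \ref{subsection31}, in particular Lemmas \ref{commutators} and \ref{renorm1}, carries over verbatim to this linear transport equation with $u=v+\gamma K$. The arguments there rely only on the Sobolev regularity of $v$, the smoothness of the singular part away from its singular point, and the identity $\gamma K(x)\cdot\nabla\chi_\delta(x)\equiv 0$ for a cutoff $\chi_\delta$ that is radial about the singular point; with the vortex pinned at the origin all three continue to hold, and the proof in fact simplifies since no regularization $z^n$ of a (nonexistent) vortex trajectory is required.

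I would then apply the renormalization identity with $\beta(s)=s^2$ and the radial test function
\[
\Phi(t,x) = \Phi_0\!\left(\frac{|x|}{R(t)}\right),
\]
where $\Phi_0$ is a smooth non-increasing cutoff equal to $1$ on $[0,1/2]$ and vanishing on $[1,\infty)$, and $R(t)$ is a smooth positive decreasing function with $R(0)\leq R_0$, to be determined. Upgrading the $L^1_{\loc}$ equality to a pointwise-in-$T$ identity by the same $C(\R^+,L^2)$ continuity argument as in the proof of Proposition \ref{constant_vorticity}, and using $\gamma K(x)\cdot\hat{x}\equiv 0$, I would obtain
\[
\int_{\RR}\Phi(T,x)\omega^2(T,x)\,dx = \int_0^T\!\!\int_{\RR}\omega^2\,\frac{\Phi_0'(|x|/R)}{R}\!\left[v(t,x)\cdot\hat{x}-\frac{R'(t)}{R(t)}|x|\right]dx\,dt.
\]
Initially $\Phi(0,\cdot)\omega_0^2\equiv 0$ since $\supp\omega_0\cap B(0,R_0)=\emptyset$. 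The goal is then to choose $R(t)$ so that the bracket in the integrand is nonnegative on the annulus $R/2\leq|x|\leq R$ where $\Phi_0'\leq 0$ is supported; this forces the left-hand side to remain nonpositive, hence identically zero, giving $\omega(T)\equiv 0$ on $B(0,R(T)/2)$.

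The main obstacle is producing a useful pointwise bound on the radial component $v(t,x)\cdot\hat{x}$ on the shrinking annulus. In the moving-vortex case this was handled by subtracting $\dot{z}=v(z)$ and invoking the almost-Lipschitz estimate on $v$, but here the cutoff center is anchored at the origin and no analogous cancellation is immediately available. The crude uniform bound $|v|\leq C(\|\omega_0\|)$ from Lemma \ref{loglip} only yields $R'\leq -C$, i.e.\ a \emph{linear} decrease of $R$ that breaks down in finite time. To reach the exponential lower bound claimed in the proposition I would need to exploit finer structure on the annulus---combining the almost-Lipschitz estimate on $v(t,\cdot)-v(t,0)$ with the rotational symmetry of $\Phi$ and the mean-zero property of $v\cdot\hat{x}$ on spheres (which comes from $\diver v=0$)---to upgrade the bound on the bracket to one of order $R$, so that the resulting ODE becomes $R'\geq -C_2 R$ and yields the desired exponential decay rate $C_1 e^{-C_2 t}$.
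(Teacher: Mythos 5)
Your setup is right as far as it goes: the renormalization machinery of Section \ref{subsection31} does carry over to \eqref{s1} with the origin in place of $z(t)$, the choice $\beta(s)=s^2$ and the upgrade to a pointwise-in-$T$ identity are exactly what is needed, and you have correctly diagnosed the obstruction --- with a cutoff $\Phi_0(|x|/R(t))$ the transport term contributes $v\cdot\hat{x}$ on the annulus $\{R/2\le|x|\le R\}$, which is only $O(\|v\|_{L^\infty})$ and forces $R'\le -C$, hence collapse of $R$ in finite time. The gap is that your proposed repair does not close. The mean-zero property $\int_{\partial B(0,\rho)}v\cdot\hat{x}\,ds=0$ is useless here because the integrand carries the weight $\omega^2\,\Phi_0'(|x|/R)$, so no cancellation over spheres survives; and the almost-Lipschitz bound on $v(t,\cdot)-v(t,0)$ gives nothing since there is no reason for $v(t,0)\cdot\hat{x}$ to vanish. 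No choice of a \emph{radial} cutoff can see the mechanism that actually protects the origin, namely that the singular field $\gamma K$ makes particles near $0$ circulate so fast that the radial drift of $v$ averages out over each revolution.

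The paper's proof implements this averaging by abandoning the radial cutoff altogether and taking the cutoff to be a function of the \emph{stream function} of the full velocity field: with $g(t,x)=\frac{1}{2\pi}\int\ln|x-y|\,\omega(t,y)\,dy$ one sets
\begin{equation*}
\F(t,x)=\chi_0\left(\frac{-\ln|x|-2\pi g(t,x)+C(t)}{2\pi R(t)}\right),
\end{equation*}
so that $\nabla\F$ is parallel to $v^{\bot}+K^{\bot}$ and the transport term $(v+K)\cdot\nabla\F$ vanishes \emph{identically} --- this is the exact analogue of the cancellation $\dot z=v(z)$ you were missing. What remains is the time derivative of the level sets, controlled by Marchioro's bound $\|\partial_t g\|_{L^\infty}\le C_1$, so that a \emph{linearly} growing $R(t)$ in the stream-function variable suffices; since the stream function of $K$ is $-\ln|x|/2\pi$, a linear excursion in that variable translates precisely into the exponential shrinkage $|x|\ge C_1e^{-C_2t}$ of the protected ball. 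Without this change of test function (or an equivalent averaging argument), the exponential rate in the statement cannot be reached.
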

\begin{proof} We may assume that $\gamma=1$.
As already mentioned, Section \ref{subsection31} of this work
applies equally well to equations \eqref{s1} by replacing the moving
point vortex by the origin. In particular, we infer that $\om^2$ is
also a weak solution of the linear transport equation corresponding
to \eqref{s1}. According to Remark \ref{remark : support}, we have
\begin{equation}
\label{inclusion : support} \supp \om(t) \subset
B\left(0,K(1+t)\right), \qquad \forall t\geq 0,
\end{equation}
where $K$ only depends on the initial conditions $R_0$ and
$||\om_0||$. We aim to apply Lemma \ref{renorm1} with the choice
$\b(t)=t^2$ and we set
\begin{equation*}
\F(t,x)=\chi_0 \left( \frac{-\ln|x|-\int \ln|x-y|\om(t,y)\,dy+C(t)}{2\pi
R(t)}\right),
\end{equation*}
where $\chi_0$ is a smooth function : $\mathbb{R} \to
\R^+$ which is identically zero for $|x|\leq 1/2$ and
identically one for $|x|\geq 1$ and increasing on $\R^+$, $R(t)$ is an increasing
continuous function and $C(t)$ is a continuous function to be determined later on. We set
\begin{equation*}
g(t,x)=\frac{1}{2 \pi}\int_{\RR} \ln|x-y| \om(t,y)\,dy,
\end{equation*}
it follows from \eqref{inclusion : support} that $2\pi|g(t,x)|\leq
C_0\left(1+\ln(1+t)\right)$ for some constant $C_0$. Increasing possibly $C_0$ we also have for $x\in \supp \om(t)$ $-\ln|x|\geq -2\pi C_0 (1+\ln(1+t))$. Therefore, setting $C(t)=2C_0\left( 1+\ln(1+t)\right)$ and
$$
y(t,x)=-\frac{\ln |x|}{2\pi}-g(t,x)+\frac{C(t)}{2\pi}
$$
we see that for all $t$ and $x\in \supp \om(t)$ the term $y(t,x)$ is positive.

 Next, it is proved by
Marchioro in \cite{mar} that if $\om$ is a smooth solution of
\eqref{s1},
\begin{equation*}
\dt g(t,x)=-\int_{\RR}K^{\bot} (y-x)\cdot \left( v(y)+K(y)\right)
\om(t,y)\,dy.
\end{equation*}
Marchioro's paper also states that
\begin{equation}
\label{b1} \|\dt g\|_{L^\infty} \leq C_1,
\end{equation}
where $C_1$ only depends on $\|\om_0\|$ and $R_0$. This can be
extended to weak Eulerian solutions of \eqref{s1} by replacing $\ln$
by $\ln_\eps$ in the definition of $g$, where $\ln_\eps|z|$
coincides with $\ln|z|$ on $B(0,\eps)^{c}$ and is identically equal
to $\ln \eps$ in $B(0,\eps)$. Letting $\eps$ then go to zero, we
deduce that for all $x$, $g(x,\cdot)$ is Lipschitz and has a time
derivative for almost every time; moreover the bound \eqref{b1}
holds at those times. We omit the details here and may consider that
$\F$ is $C^1$.

 On the other hand, we have
\begin{equation*}
\nabla_x g(t,x)=-\int_{\RR} K^{\bot}
(x-y)\om(t,y)\,dy=-v^{\bot}(t,x),
\end{equation*}
therefore
\begin{equation*}
\left(v+K\right)\cdot \nabla \F=\left(v+K\right)\cdot
\left(v^{\bot}+K^{\bot}\right)\frac{\chi_0'}{R}\equiv 0.
\end{equation*}
Besides,
\begin{equation*}
\dt \F(t,x)=\Big(-\frac{R'(t)}{R^2(t)} y(t,x)\Big)
+\frac{1}{R} (-\dt g(t,x)+\frac{C'(t)}{2\pi})\Big)
\chi_0'\left(\frac{y(t,x)}{R(t)}\right).
\end{equation*}

In view of Lemma \ref{renorm1}, this yields
\begin{equation*}
\begin{split}
\int_{\RR} &\F(T,x) \om^2(T,x)\,dx -\int_{\RR} \F(0,x)
\om^2_0(x)\,dx\\& =\int_0^T \int_{\RR} \om^2
\frac{\chi_0'\left(\frac{y(t,x)}{R}\right)}{R}\left(-\frac{R'}{R}y-\dt g+\frac{C'}{2\pi}\right)\, dx\, dt.
\end{split}
\end{equation*}
Since $y\geq 0$ the term $\chi_0'(\frac{y}{R})$ is non negative and non zero provided  $\frac{1}{2}\leq \frac{y}{R}\leq 1$, so we obtain
\begin{equation*}
\begin{split}
\int_{\RR} \F(T,x) \om^2(T,x)\,dx -\int_{\RR} \F(0,x)
\om^2_0(x)\,dx& \leq \int_0^T \int_{\RR} \om^2
\frac{\chi_0'}{R}\left(-\frac{R'}{2}-\dt g+\frac{C'}{2\pi}\right)\, dx\, dt.
\end{split}
\end{equation*}
Using \eqref{b1} and the explicit from of $C(t)$ leads to
\begin{equation*}
\begin{split}
\int_{\RR} \F(T,x) \om^2(T,x)\,dx -\int_{\RR} \F(0,x)
\om^2_0(x)\,dx& \leq \int_0^T \int_{\RR} \om^2
\frac{\chi_0'}{R}\left(-\frac{R'}{2}+C_2\right)\, dx\, dt
\end{split}
\end{equation*}
for some constant $C_2$.
We now choose
\begin{equation*}
R(t)=C_3+2 C_2 t,
\end{equation*}
with $C_3$ to be determined later on, so that
\begin{equation*}
\int_{\RR} \F(T,x) \om^2(T,x)\,dx \leq \int_{\RR} \F(0,x)
\om^2_0(x)\,dx.
\end{equation*}
Since $|g(0,x)|\leq C_0$ for all $x$ and since $\om_0$ vanishes on
$B(0,R_0)$, we have for all $x\in \supp \om_0$
\begin{equation*}
y(0,x)=-\frac{\ln|x|}{2 \pi}-g(0,x)+\frac{C(0)}{2\pi}\leq -\frac{\ln R_0}{2 \pi} +3C_0:=C_4.
\end{equation*}
 We finally choose $C_3$ so that
 \begin{equation*}
 \frac{C_4}{C_3} \leq \frac{1}{2}.
 \end{equation*}
 For this choice, we have
 \begin{equation*}
 \F(0,x)\om_0^2(x)=\chi_0\left(\frac{y(0,x)}{C_3}\right)\om_0^2(x)\equiv 0.
 \end{equation*}
 We deduce that for all $T$, $\F(T,x)\om^2(T,x)\equiv 0$. We consider $x$ such that
\begin{equation*}
 -\ln|x|\geq 2 \pi R(T)+C_0\left(1+\ln(1+T)\right),
\end{equation*}
then
\begin{equation*}
\frac{y}{R}\geq  \frac{-\ln|x|-g}{2\pi R}\geq 1,
\end{equation*}
therefore $\om(T,x)=0$. So finally
\begin{equation*}
 \om(T)\equiv 0 \qquad \textrm{on \: \:} B(0,e^{-C_0\left(1+\ln(1+T)\right)-2\pi R(T)}),
\end{equation*}
 and the conclusion follows.
\end{proof}

Using Proposition \ref{constant_vorticity_2}, it is then
straightforward to adapt the proof of Theorem \ref{theo} with a
fixed vortex point instead of a moving vortex point and finally
conclude that uniqueness holds for \eqref{s1}.

\begin{theorem}
 Let $\om_0\in L^1\cap L^\infty(\RR)$ be compactly supported such that
\begin{equation*}
\supp(\om_0)\cap \{0\}=\emptyset.
\end{equation*}
Then there exists a unique Eulerian solution of equation \eqref{s1}
with this initial data.
\end{theorem}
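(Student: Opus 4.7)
The strategy is to mirror the proof of Theorem \ref{theo} but with substantial simplifications because the vortex is now fixed. Given two Eulerian solutions $\om_1$ and $\om_2$ of \eqref{s1} with the same initial datum, I introduce $\tilde\om=\om_1-\om_2$, $\tilde v=K*\tilde\om=v_1-v_2$, and I define the simpler quantity
\begin{equation*}
r(t)=\|\tilde v(t)\|_{L^2(\RR)}^2,
\end{equation*}
so there is no vortex trajectory contribution to track. Since the singular kernel $\g K$ is shared by both solutions, the difference $\tilde H$ that appeared in Section~\ref{sub34} is absent, and there is no Dirac mass term because the position $0$ does not move.

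First I would derive the weak formulation for the velocity: arguing as in \cite{ift_lop} and as in Proposition \ref{form_velocity}, but with the vortex fixed at the origin, each solution $v_i=K*\om_i$ satisfies
\begin{equation*}
\pd_t v_i+v_i\cdot\na v_i+\diver(v_i\otimes\g K+\g K\otimes v_i)=-\na p_i,\qquad \diver v_i=0,
\end{equation*}
in an appropriate distributional sense. Subtracting the two equations and testing against $\tilde v$ (after the now-standard regularization by divergence-free smooth fields) yields, for $T$ small,
\begin{equation*}
\tfrac{1}{2}\|\tilde v(T)\|_{L^2}^2=-\int_0^T\!\!\int_{\RR}\tilde v\cdot(\tilde v\cdot\na v_1)\,dx\,dt+\int_0^T\!\!\int_{\RR}(\g K\otimes\tilde v):\na\tilde v\,dx\,dt,
\end{equation*}
the remaining terms vanishing by $\diver v_2=\diver K=0$.

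Next, by Proposition \ref{constant_vorticity_2} both $\om_1(t)$ and $\om_2(t)$ vanish on $B(0,\rho(t))$ with $\rho(t)=C_1 e^{-C_2 t}>0$, so $\tilde\om(t)\equiv 0$ there, which makes $\tilde v(t)$ harmonic on $B(0,\rho(t))$. The mean-value argument of Lemma \ref{harm} then provides
\begin{equation*}
\|\tilde v(t)\|_{L^\infty(B(0,\rho/4))}+\|\na\tilde v(t)\|_{L^\infty(B(0,\rho/4))}\leq C(\rho(t))\,\|\tilde v(t)\|_{L^2}.
\end{equation*}
The first integral is bounded by $Cp\int_0^T\|\tilde v\|_{L^2}^{2-2/p}dt$ via Calder\'on--Zygmund and interpolation exactly as in \eqref{eq : I}. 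For the second integral I would split $\RR=B(0,\rho/4)\cup B(0,\rho/4)^c$, integrate by parts on the exterior piece, and control the resulting terms using the harmonic estimate above together with $\|K\|_{L^1(B)}\leq C\rho$ and $\|\na K\|_{L^\infty(B^c)}\leq C\rho^{-2}$. This produces an estimate of the shape
\begin{equation*}
r(T)\leq C_T\int_0^T\bigl[r(t)+p\,r(t)^{1-1/p}\bigr]\,dt,\qquad \forall p\geq 2,
\end{equation*}
where $C_T$ depends on $\|\om_0\|$, $R_0$ and $T$ through $\min_{[0,T]}\rho(t)$. Finally, Lemma \ref{est} absorbs the $r(t)$ term into $p\,r(t)^{1-1/p}$, and the Osgood-type argument used at the end of Section \ref{sub34} (estimate of $r$ by $(C_T T)^p$ and $p\to\infty$) gives $r\equiv 0$ on a small interval, which is then propagated by a standard maximality/continuity bootstrap to the whole $\R^+$. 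Since $\tilde v\equiv 0$ implies $\tilde\om=\curl\tilde v\equiv 0$, uniqueness follows.

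I expect the main technical obstacle to be the careful handling of the term $\int(\g K\otimes\tilde v):\na\tilde v\,dx$: unlike in Theorem \ref{theo}, here the singularity is stationary, but the ball around it shrinks only exponentially (via Proposition \ref{constant_vorticity_2}), so all the constants in the Gronwall inequality degrade as $e^{C_2 T}$. Controlling this dependence on $T$ while still closing the Osgood argument on a uniform interval is the delicate point; one circumvents it by performing the bootstrap on intervals of length $\min(T_0,1/C_T)$ and iterating, exactly as in the proof of Theorem \ref{theo}.
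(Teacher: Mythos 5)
Your argument is correct and is precisely the adaptation the paper has in mind: its entire proof of this theorem is the single remark that, given Proposition \ref{constant_vorticity_2}, one repeats the proof of Theorem \ref{theo} with the vortex fixed at the origin, which is exactly what you carry out (same energy quantity without the trajectory term, harmonic estimates on the shrinking ball $B(0,C_1e^{-C_2t})$, the same splitting of the cross term, and the same Osgood-type iteration). One small correction: the Dirac mass in the velocity formulation does not disappear because the vortex is stationary --- it arises from the identity $\curl\diver(v\otimes H+H\otimes v)=\diver(H\om)+\g\, v(t,0)\cdot\na\delta_{0}$ rather than from $\pd_t H$, so each $v_i$ still carries a term $\g\, v_i(t,0)^{\perp}\delta_{0}$; however, upon subtracting the two equations and testing against $\tilde v$ this contributes $\g\,\tilde v(t,0)^{\perp}\cdot\tilde v(t,0)=0$, so your energy identity and the remainder of the proof are unaffected.
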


\subsection*{Acknowledgements} The authors warmly thank one of the Referees for his very judicious indications concerning the proof of uniqueness via Lagrangian methods. They are also indebted to H.J. Nussenzveig Lopes for having initiated a collaboration between them.

\end{document}